\newtheorem{theorem}{Theorem}
\newtheorem{proposition}[theorem]{Proposition}
\newtheorem{definition}{Definition}
\newtheorem{corollary}{Corollary}
\newtheorem{example}{Example}
\newtheorem{remark}{Remark}
\newtheorem{conjecture}{Conjecture}
\newtheorem{problem}{Problem}
 \newcommand{\al}{\alpha}
 \newcommand{\bC}{\mathbb{C}}
\newcommand{\cD}{\mathcal{D}}	
\newcommand {\ldr} {\langle t \rangle}
\begin{document}
\title[Secant degeneracy index of  strata of binary forms]{Secant degeneracy index of the standard strata in the space of binary forms}
\author[G.~Nenashev]{Gleb Nenashev}

\address{ Department of Mathematics,
   Stockholm University,
   S-10691, Stockholm, Sweden}
\email{nenashev@math.su.se}

\author[B.~Shapiro]{Boris Shapiro}

\address{Department of Mathematics,
   Stockholm University,
   S-10691, Stockholm, Sweden}
\email{shapiro@math.su.se}

\author[M.~Shapiro]{Michael Shapiro}

\address{Department of Mathematics, Michigan State University,
          East Lansing, MI 48824-1027}
\email{mshapiro@math.msu.edu}

\begin{abstract}
The  space $Pol_d\simeq \bC P^d$ of all complex-valued binary forms of degree $d$ (considered up to a constant factor) has a standard stratification, each   stratum of which   contains all forms whose set of multiplicities of their  distinct roots is given by a fixed partition $\mu \vdash d$. For each such stratum $S_\mu,$ we introduce its secant degeneracy index   $\ell_\mu$ which is the minimal number of projectively dependent pairwise distinct points on  $S_\mu$, i.e., points  whose projective span has  dimension smaller than $\ell_\mu-1$. In what follows, we 
discuss the secant degeneracy index $\ell_\mu$ and the secant degeneracy index $\ell_{\bar \mu}$ of the closure $\bar S_\mu$. 
\end{abstract}

\subjclass[2010] {Primary  14M99, Secondary  11E25, 11P05}

\keywords{strata of the classical discriminant, secant varieties}


\maketitle

\section{Introduction} 
Below by a \emph{form} we will always mean a \emph{binary form}. 
The standard stratification of the $d$-dimensional projective space $Pol_d$ of all complex-valued binary forms of degree $d$ (considered up to a non-vanishing constant factor) according to the multiplicities of their distinct roots  is a well-known  and widely used construction in mathematics, see e.g. \cite{Ar,Va, KhSh}. Its strata  denoted by $S_\mu$ are enumerated by  partitions  $\mu\vdash d$. In particular, cohomology of $S_\mu$ with different coefficients appears in many topological problems and  were intensively studied over the years, see e.g. \cite{Va} and references therein. 

\begin{definition} {\rm Given a positive-dimensional quasi-projective variety $V\subset \bC P^d$, we define its \emph{ secant degeneracy index} $\ell_V$  as the minimal positive integer $\ell$ such that there exists $\ell$ distinct  points  on  $V$ which are projectively dependent, i.e. whose projective span has dimension at most $\ell-2$. (Observe that singular points of $V$ are not considered as collapsing distinct points. For example, according to our definition, a point of self-intersection of $V$ is still considered a just one point of $V$.)}
\end{definition}

\begin{remark}{\rm   
The secant degeneracy index in much more general context has been introduced by e.g. Beltrametti and Sommese in~\cite{BS} while discussing the $\ell$-ampleness of the linear system of hyperplane sections for different $\ell$.  This notion was further developed in a recent paper \cite{CC}.  (We have to mention that unlike many modern authors, we only  consider reduced finite subschemes of~$V$.) As was pointed out to us by the anonymous referee, the secant degeneracy index has already appeared in a number of topics in algebraic geometry and, in particular, is closely connected with the identifiability of tensors and higher order  normality. For example, a related question about the uniqueness  of representation of generic forms of subgeneric rank as sums of powers of linear forms has been studied in a recent article \cite{COV}.}  
\end{remark}

\begin{remark}{\rm 
Obviously, $3\le\ell_V\le d+2$. The upper bound is attained  for a  rational normal curve in $\bC P^d$.   On the other hand, if $V$ contains $\bC P^1 \setminus \{\text{finite set}\}$, then $\ell_V=3$. We owe to  the anonymous referee the important observation  that the above trivial upper bound  $\ell_V=d+2$  is attained only on (Zariski open subsets of) rational normal curves and this bound can  be improved as follows. Namely, if $c:=\text{codim\;} V=d-\dim V$, then $\ell_V\le c+2$ unless $V$ is a variety of minimal degree which in our notation means that $\deg V=c+1$. By the classical results of Del Pezzo and Bertini, every positive-dimensional variety of minimal degree contains a line which implies that  $\ell_V=3$,  unless $V$ is a rational normal curve or a Veronese surface, see e.g. Theorem 1 of \cite {EiHa}. For the Veronese surface (which is a surface in $\bC P^5$), $\ell_V=4=c+1.$ To see that, one can take $4$ points on a conic. Thus, one conclude that $\ell_V\le c+2$ unless $V$ is a Zariski open subset of a rational normal curve in which case $\ell_V=c+3=d+2$.
}
\end{remark}

\medskip
Observe that if a quasi-projective variety $V$ is contained in quasi-projective $W$, then $\ell_V\ge \ell_W$. For a positive-dimensional quasi-projective variety $V\subset \bC P^d$, denote by $\ell_{\bar V}$ the secant degeneracy index of the closure $\bar V \subset \bC P^d$. Obviously, $\ell_{\bar V} \le \ell_{V}$. The latter inequality can be strict as shown by Example~\ref{ex} below.


\medskip 
The principal question considered in the present paper is as follows.

\begin{problem}\label{prob:index}
For a given partition $\mu\vdash d$, calculate/estimate  its secant degeneracy  indices $\ell_\mu:=\ell_{S_\mu}$ and  $\ell_{\bar \mu}:=\ell_{\bar S_\mu}$. 
\end{problem}

\medskip

\begin{example}\label{ex}
{\rm For 
$\mu(d)=(2d+1,d,d,d,d)$ and $\mu'(d)=(2d+1,2d,2d)$, $\ell_{\bar \mu(d)}=\ell_{\mu'(d)}=3$, but $\ell_{\mu(d)}$ grows to infinity when $d\to \infty$. (This result follows from Theorem~\ref{minjump} below.)}
\end{example}

For a given partition $\mu$, the equation 
\begin{equation}\label{eq:SFMU}
f_{1}+f_2+\dots +f_{ \ell_{\mu}}=0,
\end{equation} 
is called the {\it minimal secant degeneracy relation}   for $S_\mu$. A solution of the latter equation is a collection of pairwise non-proportional forms from $S_\mu$ satisfying \eqref{eq:SFMU}. 

Analogously, for a given partition $\mu$, the equation 
\begin{equation}\label{eq:SFMUb}
f_{1}+f_2+\dots +f_{ \ell_{\bar \mu}}=0,
\end{equation} 
is called the {\it minimal secant degeneracy relation for }   $\bar S_\mu$. A solution of the latter equation is a collection of pairwise non-proportional forms from $\bar S_{ \mu}$ satisfying \eqref{eq:SFMUb}. 

\medskip

Most of our results  deal with the secant degeneracy  index $\ell_{\mu}$. However,  the second part of Theorem~\ref{minjump}  provides a non-trivial lower bound for $\ell_{\bar \mu}$ generalizing a similar result of a well-known paper \cite{NeSl} from 1979 where the special case of  partitions with equal parts was considered. 

\medskip
 The first result of this note is as follows.  
Recall the notion of the refinement  partial order $``\succ"$ on the set of all partitions of a given positive integer  $d$. Namely, $\mu^\prime \succ \mu$ in this order if $\mu^\prime$ is obtained from $\mu$ by merging of some parts of $\mu$.  The unique minimal element of this partial order is $(1)^d,$ while its unique maximal element is $(d)$. 

\medskip
For a partition $\mu=(\mu_1\ge \mu_2 \ge \dots \ge \mu_r)$,  define its \emph{jump multiset}  
$J_\mu$ as the multiset of all positive  numbers in the set  $\{\mu_1-\mu_2,\ldots,\mu_{r-1}-\mu_r, \mu_r\}$. We denote by  $h_\mu$ the minimal  (positive) jump of $\mu$, i.e. the minimal element of $J_\mu$, and by $h_{\bar{\mu}}$ the minimal jump of all partitions $\mu'\succeq  \mu$.


\begin{theorem}
\label{minjump}
For any $\mu=(\mu_1\ge \mu_2 \ge \dots \ge \mu_r)$,  
$$\rm{(i)} \quad \quad\ell_\mu > \sqrt{h_\mu+\frac{1}{4}}+\frac{3}{2},$$
and
$$\rm{(ii)} \quad \quad \ell_{\bar{\mu}}> \sqrt{h_{\bar{\mu}}+\frac{1}{4}}+\frac{3}{2}.$$
\end{theorem}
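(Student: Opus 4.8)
The plan is to run a Wronskian argument on a minimal secant relation, in the spirit of the classical Mason--Stothers and Newman--Slater estimates. Put $\ell=\ell_\mu$ and fix a minimal relation $f_1+\dots+f_\ell=0$ with pairwise non-proportional $f_i\in S_\mu$; minimality means that any $\ell-1$ of the forms are linearly independent. Set $m=\ell-1$ and let $W$ be the $SL_2$-covariant Wronskian of $f_1,\dots,f_{m}$, a binary form which is nonzero precisely because these $m$ forms are independent. Two facts drive everything. First, $W$ has degree exactly $md-m(m-1)=(\ell-1)d-(\ell-1)(\ell-2)$. Second, since $f_\ell=-(f_1+\dots+f_{m})$, the Wronskian of any $m$-element subset of $\{f_1,\dots,f_\ell\}$ equals $W$ up to a nonzero scalar, so the roots of \emph{all} $\ell$ forms enter the zero divisor of $W$. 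Note that the target inequality rearranges to $(\ell-1)(\ell-2)>h_\mu$, so it suffices to bound $h_\mu$ in terms of the Wronskian degree defect $m(m-1)$.

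Next I would localize at the roots. For the linear system $V=\langle f_1,\dots,f_\ell\rangle$ of projective dimension $m-1$ and a point $p\in\bP^1$, the order of vanishing of $W$ at $p$ equals the ramification index $\rho(p)=\sum_{j=1}^{m}\bigl(b_j(p)-(j-1)\bigr)$, where $b_1(p)<\dots<b_{m}(p)$ is the vanishing sequence of $V$ at $p$; summing the Plücker identity gives $\sum_{p}\rho(p)=\deg W$. The crucial input from $S_\mu$ is that each $\operatorname{ord}_p f_i$ is either $0$ or a part of $\mu$, so at a point on one of the forms the sequence $b_\bullet(p)$ is assembled from values that are parts of $\mu$, together with the orders produced by minimal cancellation of leading terms (which forces the smallest occurring order to be attained at least twice). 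In particular, at a point where some form has a root of multiplicity $\nu$ one gets $\rho(p)\ge \nu-(\ell-2)$, with stronger lower bounds at points shared by several forms.

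The heart of the matter is converting these local estimates into $(\ell-1)(\ell-2)>h_\mu$. Balancing $\sum_p\rho(p)=(\ell-1)d-(\ell-1)(\ell-2)$ against the contributions forced by the $r$ root-multiplicities of each of the $\ell$ forms yields a relation between $\ell$ and the combinatorics of $\mu$; the point is that the gaps between consecutive entries of $b_\bullet(p)$ are differences of parts of $\mu$ (or a part itself), hence are bounded below by the minimal jump $h_\mu$, and it is this spacing that caps $h_\mu$ by $(\ell-1)(\ell-2)$. I expect the main obstacle to lie exactly here: extracting the \emph{sharp} constant $(\ell-1)(\ell-2)$ rather than the cruder $\ell(\ell-2)$ that a naive degree count produces requires handling with care the coincidences of roots among the $f_i$, the behaviour at infinity of the covariant Wronskian, and the fact that it is the \emph{minimal} jump, not the individual parts, that controls the ramification spacing.

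For part (ii) I would run the same analysis inside the closure $\bar S_\mu$. A minimal relation among forms of $\bar S_\mu$ consists of forms whose multiplicity partitions are coarsenings $\mu'\succeq\mu$, since distinct roots of a form in $S_\mu$ may collide in the limit and merge parts; thus each participating form lies in some $S_{\mu'}$ with $\mu'\succeq\mu$. Replacing $h_\mu$ by $h_{\bar\mu}=\min_{\mu'\succeq\mu}h_{\mu'}$ throughout the local ramification estimate then gives $(\ell_{\bar\mu}-1)(\ell_{\bar\mu}-2)>h_{\bar\mu}$. The equal-part case $\mu=(n)^k$, where the forms are $n$-th powers and $h_{\bar\mu}=n$, is precisely the vanishing sum of powers treated in \cite{NeSl}, so the argument recovers and generalizes that estimate; the extra bookkeeping required for arbitrary coarsenings, and for participating forms acquiring roots at infinity, is the remaining technical point.
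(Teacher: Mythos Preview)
Your overall strategy is right: a Wronskian argument in the Mason--Stothers/Newman--Slater style, aiming at the rearranged target $(\ell-1)(\ell-2)>h_\mu$. But the proposal has a genuine gap at precisely the point you flag as ``the heart of the matter.'' The claim that the gaps in the vanishing sequence $b_\bullet(p)$ are differences of parts of $\mu$ (hence $\ge h_\mu$) is false. The entries $b_j(p)$ are vanishing orders of \emph{linear combinations} of the $f_i$, not of the $f_i$ themselves; once several $f_i$ share the same order at $p$, Gaussian elimination produces intermediate orders spaced by $1$. For instance, with $\mu=(10,5)$ (so $h_\mu=5$), at a point where $f_1,f_2$ vanish to orders $10,5$ and the remaining $f_i$ do not vanish, the sequence begins $0,1,2,\ldots$; the first gap is $1$, not $\ge 5$. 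So the mechanism by which $h_\mu$ is supposed to enter your inequality does not actually fire, and the naive summation of $\rho(p)\ge\nu-(\ell-2)$ over roots only yields information about $d$ and $r$, not about $h_\mu$.

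The paper's proof supplies exactly the missing idea: before taking Wronskians, divide through by $g=\gcd(f_1,\ldots,f_\ell)$. This is the step that makes $h_\mu$ appear cleanly. After division, every root of $f_i/g$ has multiplicity $\mu_k-\mu_l$ for some $k\le l$ (or $\mu_k$ itself), and any such nonzero difference is $\ge h_\mu$; in case~(ii) the multiplicities become differences of \emph{sums} of parts, hence $\ge h_{\bar\mu}$. Crucially, at each root $\alpha$ some $f_s/g$ does \emph{not} vanish, so one may compute $\operatorname{ord}_\alpha$ via the Wronskian $w_s$ omitting that form and get the column-by-column lower bound without any cancellation subtleties. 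One then simply bounds the number of distinct roots of each $f_i/g$ by $\deg(f_i/g)/h_\mu$ and compares with the degree $(\ell-1)\deg(f_i/g)-(\ell-1)(\ell-2)$ of the Wronskian; this yields $(\ell-1)(\ell-2)>h$ directly, with no need to analyse the fine structure of vanishing sequences. Your ramification-index formulation is equivalent at the level of bookkeeping, but without the GCD normalisation you cannot isolate the minimal jump.
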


\medskip
To formulate further results, we divide the set of all partitions into two natural disjoint subclasses as follows. 

\medskip
\noindent
{\bf Notation.} 
For a given partition $\mu=(\mu_1\ge \mu_2 \ge \dots \ge \mu_r)$ and a non-negative integer $t$, define   the partition $\mu^{\ldr}$ as $$\mu^{\ldr} :=(\mu_1+t\ge \mu_2+t\ge \dots \ge \mu_r+t).$$

\begin{definition}
{\rm We say that a partition $\mu$ has a \emph {growing secant degeneracy index} if $\lim_{t\to \infty}\ell_{\mu^{\ldr}}=+\infty$ and we say that $\mu$ has a \emph {stabilising secant degeneracy index} otherwise. }�
\end{definition}

We are able to characterize these two classes in the following terms. 

\begin{definition}
{\rm Given a partition $\mu$ and a positive integer $m$,  a solution of 
\begin{equation}\label{eq:SFGEN}
f_1+f_2+\dots +f_m=0,
\end{equation} 
with pairwise non-proportional $f_i\in S_\mu$ is called 
 a \emph{common radical solution} if all $f_i$'s have the same radical, i.e. the same set of distinct linear factors (considered up to a constant factor). We call a partition $\mu$ such there exists $m$ and a common radical solution of \eqref{eq:SFGEN} a \emph{partition admitting a common radical solution}. } 
\end{definition}

\medskip
The following proposition is straightforward. 
\begin{proposition}\label{pr:general}
A partition $\mu=(\mu_1\ge \mu_2\ge \dots \ge \mu_r)=(i_1^{m_1}, i_2^{m_2},\dots, i_s^{m_s})$ with  distinct $i_j$'s has a stabilising secant degeneracy index if and only if, for some positive integer $m,$ there exists a common radical solution of \eqref{eq:SFGEN}.  A partition $\mu$ as above has a growing secant degeneracy index if and only if  the linear span of the $Sym_r$-orbit of any form $f\in S_\mu$ has the dimension equal to the multinomial coefficient $\frac{r!}{m_1!m_2!\dots m_s!}$.  (Here the symmetric group $Sym_r$ acts on any $f\in S_\mu$ by permuting all its $r$ distinct roots.)
\end{proposition}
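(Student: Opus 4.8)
The plan is first to notice that the two biconditionals are equivalent, so that only one of them must be proved. A common radical solution of \eqref{eq:SFGEN} is exactly a linear dependence among the pairwise distinct monomials $\prod_j L_j^{\mu_{\sigma(j)}}$ built from a fixed tuple $L_1,\dots,L_r$ of distinct linear forms, as $\sigma$ ranges over the distinct multiplicity assignments. Hence a common radical solution exists if and only if the $Sym_r$-orbit of some $f\in S_\mu$ fails to be linearly independent, i.e. if and only if the orbit span of some $f$ has dimension strictly below $\frac{r!}{m_1!\cdots m_s!}$. Negating, ``the orbit of every $f$ spans the full dimension $\frac{r!}{m_1!\cdots m_s!}$'' is equivalent to ``no common radical solution exists''. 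Since growing is the negation of stabilising, it therefore suffices to prove the single statement: $\mu$ is stabilising if and only if $\mu$ admits a common radical solution.

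One direction is immediate. Given a common radical solution $\sum_{k=1}^m\prod_j L_j^{\mu_{\sigma_k(j)}}=0$, I would multiply it by the fixed form $\prod_j L_j^{\,t}$. This preserves the linear relation while raising every multiplicity by $t$, producing a common radical solution for $\mu^{\ldr}$ with the same $m$ pairwise non-proportional summands; thus $\ell_{\mu^{\ldr}}\le m$ for all $t$, and $\mu$ is stabilising.

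For the converse I would show that a stabilising $\mu$ must admit a common radical solution. Assume $\ell_{\mu^{\ldr}}\le L$ for infinitely many $t$, and fix, along a subsequence, minimal relations $f_1+\dots+f_\ell=0$ with $\ell\le L$ and the $f_i\in S_{\mu^{\ldr}}$ pairwise non-proportional. The crux is to prove that for all large $t$ the $f_i$ share one common radical. If not, some point $a$ is a root of $f_i$ for $i$ in a non-empty set $A$ and of no $f_i$ for $i$ in a non-empty set $B$; then $\sum_{i\in B}f_i=-\sum_{i\in A}f_i$ vanishes at $a$ to order $\ge 1+t$, although no $f_i$ with $i\in B$ does. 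By minimality the forms $\{f_i:i\in B\}$ are linearly independent, since any dependence among them would be a strictly shorter relation among pairwise non-proportional forms. In a $p$-dimensional space of polynomials with vanishing sequence $0\le m_1<\dots<m_p$ at $a$ one has $\mathrm{ord}_a W=\sum_i(m_i-(i-1))$, where $W$ is a Wronskian of a basis, so $m_p\le (p-1)+\mathrm{ord}_a W$; hence I only need $\mathrm{ord}_a W(f_i:i\in B)$ to be bounded independently of $t$.

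To bound it, I would reduce $\mathrm{ord}_a W$, by the standard logarithmic-derivative reduction of the Wronskian together with induction on $|B|$, to the vanishing orders at $a$ of the pairwise differences $\Lambda_i-\Lambda_{i'}$ of the logarithmic derivatives $\Lambda_i=f_i'/f_i=\sum_j\frac{\mu_{\pi_i(j)}+t}{x-a_{i,j}}$. Expanding $\Lambda_i$ at $a$, the coefficient of $(x-a)^{n-1}$ is, up to sign, $t\sum_j(a-a_{i,j})^{-n}+\sum_j\mu_{\pi_i(j)}(a-a_{i,j})^{-n}$, whose $t$-linear part is the $n$-th power sum of the multiset $\{(a-a_{i,j})^{-1}\}_{j=1}^r$. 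Two distinct $r$-element multisets already differ in a power sum of order $\le r$ by Newton's identities, and two forms of equal radical (nonzero nodes) differ, again by order $\le r$, in the bounded $\mu$-dependent part; in either case $\mathrm{ord}_a(\Lambda_i-\Lambda_{i'})\le r-1$ for large $t$, so $\mathrm{ord}_a W$ is bounded by a constant depending only on $r$ and $L$. This contradicts the order $\ge 1+t$, forcing all $f_i$ to share a radical $\{L_1,\dots,L_r\}$; dividing the relation by $\prod_j L_j^{\,t}$ then yields $\sum_i\prod_j L_j^{\mu_{\pi_i(j)}}=0$, a common radical solution for $\mu$. I expect the one genuinely technical point, and the main obstacle, to be exactly this uniform-in-$t$ control of $\mathrm{ord}_a W$: although the coefficients of $\Lambda_i$ grow linearly in $t$, the orders at which distinct forms separate are dictated by their fixed radicals and so stay bounded, and making the inductive reduction from $\mathrm{ord}_a W$ to the pairwise quantities $\mathrm{ord}_a(\Lambda_i-\Lambda_{i'})$ fully rigorous is where the real work lies.
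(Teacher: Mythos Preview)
The paper does not prove this proposition; it is simply declared ``straightforward''. Your reduction of the two biconditionals to one is correct, and your argument for ``common radical solution $\Rightarrow$ stabilising'' (multiply by $\prod_j L_j^{t}$) is exactly the obvious one.

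The converse is where you diverge. The clean route, which the paper supplies via Theorem~\ref{onlycomrad}, is a global degree count: if $\ell_{\mu^{\ldr}}\le L$ along a sequence $t\to\infty$, pick $t$ with $L\le\sqrt{(\mu_r+t)/(r-1)}+1$; Theorem~\ref{onlycomrad} then forces every length-$\le L$ relation in $S_{\mu^{\ldr}}$ to be common radical, and dividing by $\prod_j L_j^{t}$ brings it down to $S_\mu$. The proof of Theorem~\ref{onlycomrad} also uses Wronskians, but only through the columnwise divisibility by $g_i=\prod_j(x-a_{i,j})^{\mu_j-m+2}$ and a comparison of $\deg W$ with $\deg\prod_{i\ne t}g_i$; no local order-of-vanishing analysis is needed.

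Your plan instead tries to bound $\mathrm{ord}_a W(f_i:i\in B)$ at a single point from the pairwise bounds $\mathrm{ord}_a(\Lambda_i-\Lambda_{i'})\le r-1$. As you suspect, this is the crux, and as stated it does not go through: for arbitrary holomorphic $f_i$ with $f_i(a)\ne 0$, pairwise control of the $\Lambda_i-\Lambda_{i'}$ does \emph{not} bound $\mathrm{ord}_a W$. For instance, with $a=0$, $f_1=1$, $f_2=e^x$, and $f_3=c_0+ce^x+x^{N+1}h(x)$ where $c,c_0\ne 0$, all three pairwise $\Lambda$-differences have order $0$ at $0$, yet $W(f_1,f_2,f_3)=e^x(f_3''-f_3')$ vanishes to order at least $N-1$. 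What rescues the argument in your actual situation is that each $\Lambda_i$ is rational with exactly $r$ simple poles, so every $D_{ij}=\Lambda_i-\Lambda_j$ has numerator of degree at most $2r-1$; feeding this into the identity $W(f_1,\dots,f_p)=f_1\cdots f_p\cdot(\text{polynomial in the }D_{ij}\text{ and their derivatives})$ does yield a bound depending only on $r$ and $p\le L$. But that uses the global rational structure, not merely the local orders you isolate, and amounts to reproving Theorem~\ref{onlycomrad} by a harder route. The paper's global degree comparison is both shorter and sidesteps this gap.
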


At the moment we do not have a purely combinatorial description of partitions admitting a  common radical solution. However we were able to study  a somewhat stronger property. 

\begin{definition}
{\rm We say that a partition $\mu=(\mu_1\ge \mu_2\ge \dots \ge \mu_r)$ admits  a  \emph{strongly common radical solution} if for some integer $m$,  a common radical solution exists for any choice of $r$ distinct roots.}
\end{definition}


\begin{theorem}\label{prop:parking}
A partition $\mu=(\mu_1 \ge \mu_2 \ge \dots \ge \mu_r)$ admits a strongly common radical solution if  there exists  a sequence $\{a_1,\ldots,a_r\}$ of positive integers such the number of different permutations $\pi$ of $\mu$ such that $(\pi \circ \mu)_i\geq {a_i}$ is at least $|\mu|-\sum_{i=1}^r{a_i}+2$, where  $(\pi \circ \mu)_i$ is the $i$-th entry of the partition $(\pi \circ \mu).$

\end{theorem}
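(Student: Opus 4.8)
The plan is to turn the existence of a (strongly) common radical solution into an elementary dimension count, matching the numerology $|\mu|-\sum a_i+2$ appearing in the statement. Fix an arbitrary set of $r$ pairwise distinct roots and let $L_1,\dots,L_r$ be the corresponding pairwise non-proportional linear forms. Every form of $S_\mu$ with this prescribed radical has the shape $f_\nu=\prod_{i=1}^{r}L_i^{\nu_i}$, where $\nu=(\nu_1,\dots,\nu_r)$ ranges over the distinct rearrangements of the parts of $\mu$ (the multiset of exponents equals $\mu$ and the $L_i$ are distinct, so indeed $f_\nu\in S_\mu$); moreover distinct rearrangements yield pairwise non-proportional forms. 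Thus a common radical solution supported on this radical is nothing but a nontrivial linear dependence among some of the $f_\nu$, and a strongly common radical solution is such a dependence available for every choice of $L_1,\dots,L_r$.

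Next I would invoke the hypothesis. Write $a=(a_1,\dots,a_r)$ for the given sequence, set $w:=\prod_{i=1}^{r}L_i^{a_i}$ and $e:=|\mu|-\sum_{i=1}^{r}a_i$. For every rearrangement $\nu$ with $\nu_i\ge a_i$ for all $i$ the form $f_\nu$ is divisible by $w$, and
\[
f_\nu=w\cdot g_\nu,\qquad g_\nu:=\prod_{i=1}^{r}L_i^{\,\nu_i-a_i},
\]
so each $g_\nu$ is a binary form of degree $e$, and distinct admissible $\nu$ still produce distinct $g_\nu$. By the hypothesis the number of admissible $\nu$ is at least $|\mu|-\sum a_i+2=e+2$.

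Since the space of binary forms of degree $e$ has dimension $e+1$, any $e+2$ of the forms $g_\nu$ are linearly dependent. Hence there is a nontrivial relation $\sum_\nu c_\nu g_\nu=0$; multiplying through by $w$ yields $\sum_\nu c_\nu f_\nu=0$, a nontrivial dependence among forms of $S_\mu$ with common radical $\{L_1,\dots,L_r\}$. Restricting to a minimal dependent subset (a circuit) makes all surviving coefficients nonzero, and absorbing them into the forms produces a bona fide common radical solution of \eqref{eq:SFGEN}. Crucially, the dimension count does not see the actual positions of the roots, so the construction runs verbatim for every choice of $r$ distinct roots --- this is precisely the strongly common radical property.

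The delicate point, and the one I expect to be the real obstacle, is the quantifier ``for some $m$'': the circuit produced above could in principle change size as the roots vary, whereas the definition asks for one integer $m$ serving all radicals. I would resolve this by fixing $e+2$ admissible rearrangements $\nu^{(1)},\dots,\nu^{(e+2)}$ once and for all; the associated $g_{\nu^{(j)}}$ are $e+2$ forms in an $(e+1)$-dimensional space and so remain dependent for every choice of the $L_i$, giving a uniform bound $m\le e+2$. Promoting this to an honest relation with all coefficients nonzero requires showing that each chosen $g_{\nu^{(j)}}$ lies in the span of the others, i.e.\ a ``general position'' statement for products of powers of the fixed linear forms $L_i$; this is the only step that is not purely formal, and it is where I would concentrate the work.
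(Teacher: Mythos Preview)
Your core argument is exactly the paper's: factor out $w=\prod_i L_i^{a_i}$ from each admissible $f_\nu$, land in the $(e{+}1)$-dimensional space of degree-$e$ forms with $e=|\mu|-\sum a_i$, and conclude linear dependence from the pigeonhole count $e+2>e+1$. The paper's proof is no more than these three lines and stops at ``therefore the forms $f_\pi$ are linearly dependent,'' treating that as the conclusion.

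Your final paragraph is therefore over-engineering relative to what the paper does. The worry about a uniform $m$ across all radicals is legitimate given a strict $\exists m\,\forall\text{roots}$ reading of the definition, but the paper's own proof makes no attempt to secure it; it simply exhibits, for each choice of radical, a nontrivial dependence among the admissible orbit elements and declares the theorem proved. So you should not present the uniform-$m$ issue as ``the real obstacle'': either the intended reading of the definition is $\forall\text{roots}\,\exists m$ (which your circuit argument already delivers), or the paper itself has the same gap. Either way, the dimension count is the entire content of the proof, and you have it.
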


In fact, we strongly suspect that the converse to Theorem~\ref{prop:parking} holds as well. 

\begin{conjecture}\label{conj:converse}  A necessary and sufficient condition for a partition $\mu=(\mu_1 \ge \mu_2 \ge \dots \ge \mu_r)$ to admit a strongly common radical solution is given by the existence of  a sequence $\{a_1,\ldots,a_r\}$ of positive integers such the number of different permutations $\pi$ of $\mu$ such that $(\pi \circ \mu)_i\geq {a_i}$ is at least $|\mu|-\sum_{i=1}^r{a_i}+2$, where  $(\pi \circ \mu)_i$ is the $i$-th entry of the partition $(\pi \circ \mu)$.
\end{conjecture}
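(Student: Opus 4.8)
The first step is to translate the analytic statement into linear algebra over the field of rational functions in the roots. Fix $r$ distinct points $x_1,\dots,x_r\in\bC P^1$ and, for each rearrangement $\pi$ of the multiset $\mu$, let $M_\pi=\prod_{i=1}^r(x-x_i)^{(\pi\circ\mu)_i}$ be the associated monomial of degree $d=|\mu|$. A same-radical family of pairwise non-proportional forms summing to zero is exactly a linear dependence among the $M_\pi$ with nonzero coefficients, so $\mu$ admits a strongly common radical solution if and only if the $M_\pi$ are linearly dependent for \emph{every} choice of distinct roots. Since ``rank $<N$'', where $N=\frac{r!}{m_1!\cdots m_s!}$ is the number of distinct $M_\pi$, is the simultaneous vanishing of all maximal minors of the coefficient matrix and hence a Zariski-closed condition, dependence on a dense set of root configurations forces dependence on all of them. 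Thus the property is equivalent to generic (equivalently, $\bC(x_1,\dots,x_r)$-linear) dependence of $\{M_\pi\}$, and the whole problem becomes purely algebraic: describe combinatorially when the $M_\pi$ are generically dependent.

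With this reformulation the sufficiency direction is precisely Theorem \ref{prop:parking}: the permutations counted there are exactly the $M_\pi$ divisible by $P_a:=\prod_i(x-x_i)^{a_i}$, their quotients $M_\pi/P_a$ have degree $d-\sum_i a_i$ and hence lie in a space of dimension $d-\sum_i a_i+1$, so once their number exceeds this dimension they are dependent for every root configuration. The content of the converse is that divisibility by a common $P_a$ is the \emph{only} source of generic dependence. I would isolate this as a statement about arbitrary collections of monomials in the fixed linear forms $(x-x_i)$: for generic roots, a finite set $\mathcal M$ of distinct degree-$D$ monomials $\prod_i(x-x_i)^{f_i}$ is linearly dependent if and only if there is a vector $b=(b_1,\dots,b_r)$ with $b_i\ge0$ and $\#\{M\in\mathcal M:\ M\text{ divisible by }P_b\}\ge (D-\sum_i b_i)+2$. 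Conjecture \ref{conj:converse} is the special case $\mathcal M=\{M_\pi\}$, in which the minimal witness $b$ automatically has all $b_i\ge1$ because every part of $\mu$ is positive.

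To prove the necessity direction of this general statement I would pass to a minimal dependence $\sum_{\pi\in T}c_\pi M_\pi=0$ with all $c_\pi\ne0$ and set $a_i:=\min_{\pi\in T}(\pi\circ\mu)_i$, so that $P_a=\gcd\{M_\pi:\pi\in T\}$. If this common factor is nontrivial, dividing by it produces a minimal dependence among the degree-$(d-\sum_i a_i)$ monomials $M_\pi/P_a$; since the divisibility counts of the quotient collection are dominated by those of the original, induction on the degree $D$ yields a vector $a'$ for the quotients, and then $a+a'$ works for $\{M_\pi\}$. The whole argument therefore reduces to the base case in which the minimal dependence already has $\gcd=1$. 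In that case minimality makes every proper subset independent, so (by sufficiency applied to subsets) no sub-collection of $T$ is overloaded, and the no-overload hypothesis applied at $a=0$ gives $|T|\le D+1$.

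The hard part — and, I believe, the reason the statement is only conjectured — is to rule out exactly such a $\gcd$-$1$ minimal dependence of size at most $D+1$ with no overloaded sub-collection. Concretely one must show that, for generic roots, a set of distinct degree-$D$ monomials with trivial $\gcd$ and with $\#\{M\text{ divisible by }P_b\}\le (D-\sum_i b_i)+1$ for all $b\ge0$ is linearly independent. My plan is to certify independence by a system of distinct representatives: match each $M_\pi$ to a Taylor-coefficient functional $L_{i,k}(F)=[(x-x_i)^k]F$, using that $L_{i,k}(M_\pi)=0$ for $k<(\pi\circ\mu)_i$ and $L_{i,k}(M_\pi)\ne0$ at $k=(\pi\circ\mu)_i$. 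The hypothesis ``no overload for all $b$'' is precisely the deficiency-free form of Hall's condition for the bipartite graph joining each $M_\pi$ to its admissible functionals, so a matching exists; the genuine obstacle is to upgrade this matching from ``nonzero diagonal entries'' to ``nonsingular matrix''. For that I would degenerate the roots along a one-parameter family $x_i=x_i(t)$ with generic valuations and argue that the matched diagonal term strictly dominates the determinant, so that no cancellation can occur in the limit. Making the valuation bookkeeping match the divisibility counting exactly, and handling the coincidences forced by equal parts of $\mu$, is where the difficulty concentrates; controlling it would simultaneously establish the general Proposition above and confirm Conjecture \ref{conj:converse}.
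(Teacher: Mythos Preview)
This statement is labelled a conjecture for a reason: the paper contains no proof of it. The authors prove only the sufficiency direction (Theorem~\ref{prop:parking}) and say explicitly, both after the conjecture and again in \S\ref{finsec}, that the necessity direction remains open; their own reformulation is that a certain matrix in the roots $x_1,\dots,x_r$ generically has full rank, which they call ``highly plausible'' but cannot establish beyond special cases. So there is no paper proof to compare your attempt against.

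Your reduction is the same as theirs: the Zariski-closure argument that ``for every choice of distinct roots'' is equivalent to ``for generic roots'', and the identification of a strongly common radical solution with a linear dependence among the $M_\pi$, is exactly the equivalence asserted in \S\ref{finsec}, and you correctly recognise sufficiency as Theorem~\ref{prop:parking}. For necessity you propose an inductive scheme on the degree via the gcd, reducing to the $\gcd=1$ case, and then a Hall-matching of monomials to Taylor-coefficient functionals followed by a valuation/degeneration argument to certify nonsingularity. The Hall step is correct as far as it goes --- your ``no overload for all $b$'' is precisely Hall's condition for the bipartite graph you describe --- but a system of distinct representatives only gives a nonzero generalised diagonal, not a nonzero determinant, and you yourself flag the passage from one to the other as ``where the difficulty concentrates''. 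That passage \emph{is} the conjecture: it is exactly the full-rank claim the authors could not prove. Your degeneration sketch is plausible but not an argument; in particular you have not specified valuations that make the matched diagonal dominate, nor handled the cancellations forced by equal parts of~$\mu$. Nothing you wrote is wrong, but the decisive step is not carried out, so what you have is a program rather than a proof --- which is also the status of the statement in the paper.
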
 

At the moment we can settle Conjecture~\ref{conj:converse} for a large class of partitions, but not for all partitions.

\medskip
The structure of the paper is as follows. In Section~\ref{sec:index}, we formulate several general results about $\ell_{\mu}$, the most interesting of them being an  upper bound of $\ell_\mu$ in terms of the  minimal jump.   
In Section~\ref{sec:gr&st}, we discuss common radical solutions  of \eqref{eq:SFMU} and  
in Section~\ref{finsec}, we present a number of open problems. 


\bigskip
 
\noindent
{\bf Acknowledgements.}  The second author is grateful to Professor B.~Reznick of UIUC for  discussions of the topic. The third author wants to acknowledge the hospitality of the Mathematics Department, Stockholm University in September-October 2015. The authors want to express their gratitude to the anonymous referee whose constructive criticism allowed us to substantially improve the quality of the exposition.

\medskip
\section {General results on the  secant degeneracy index}\label{sec:index}

 Given a partition  $\mu=(\mu_1\ge \mu_2 \ge \dots \ge \mu_r)$, we call $\nu=(\mu_{i_1}\ge \mu_{i_2}\ge \dots \ge \mu_{i_s})$ where $1\le i_1<i_2<\dots <i_s\le r$, a {\it subpartition} of $\mu$. 

\begin{proposition}\label{prop2} For a partition  $\mu=(\mu_1\ge \mu_2 \ge \dots \ge \mu_r)$ and any   subpartition $\nu$ of $\mu$, the inequality $$\ell_\mu\le \ell_{\nu}$$
holds.  In particular, $\ell_\mu\le \mu_r+2$.\end{proposition}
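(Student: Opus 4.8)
The plan is to transport a secant degeneracy configuration from $S_\nu$ into $S_\mu$ by multiplying each form by a fixed auxiliary form, using that multiplication by a fixed nonzero form is an injective $\bC$-linear map on spaces of binary forms and therefore preserves both non-proportionality and linear dependence. The monotonicity remark recorded earlier ($V\subset W\Rightarrow \ell_V\ge\ell_W$) is the heuristic behind this: multiplication by $h$ will embed $S_\nu$ into $S_\mu$.

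First I would unpack what $\ell_\nu$ provides. Write $d'=|\nu|$. By the definition of the secant degeneracy index there exist $\ell_\nu$ pairwise non-proportional forms $g_1,\dots,g_{\ell_\nu}\in S_\nu\subset Pol_{d'}$ whose projective span has dimension at most $\ell_\nu-2$; equivalently, there is a nontrivial linear relation $\sum_i\lambda_i g_i=0$ among them. Since $\nu$ is a subpartition of $\mu$, the partition $\mu$ is the multiset union of $\nu$ with a complementary partition $\rho$ of $d-d'$. The finitely many forms $g_1,\dots,g_{\ell_\nu}$ have only finitely many roots in total, so I can pick a form $h\in S_\rho$ of degree $d-d'$ whose distinct roots are disjoint from the roots of all the $g_i$. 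Setting $f_i:=h\,g_i$, the set of distinct roots of $f_i$ is the disjoint union of the roots of $h$ and of $g_i$, so the multiset of root multiplicities of $f_i$ equals the multiset union of $\rho$ and $\nu$, namely $\mu$; hence $f_i\in S_\mu$.

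It then remains to check that $f_1,\dots,f_{\ell_\nu}$ is a valid secant configuration on $S_\mu$. They are pairwise non-proportional, since $f_i=c\,f_j$ would give $h\,g_i=c\,h\,g_j$, and cancelling $h\neq0$ in the integral domain $\bC[x,y]$ forces $g_i=c\,g_j$, contradicting non-proportionality for $i\neq j$. They are linearly dependent, since multiplication by $h$ is linear and so $\sum_i\lambda_i f_i=h\sum_i\lambda_i g_i=0$, whence the $\ell_\nu$ distinct points $f_i$ span a projective subspace of dimension at most $\ell_\nu-2$. As $\ell_\mu$ is by definition the minimal size of such a configuration on $S_\mu$, this yields $\ell_\mu\le\ell_\nu$. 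For the final assertion I would take $\nu=(\mu_r)$, so that $S_{(\mu_r)}\subset Pol_{\mu_r}\simeq\bC P^{\mu_r}$; any $\mu_r+2$ distinct points of $\bC P^{\mu_r}$ are projectively dependent because $\mu_r+2$ vectors cannot be independent in a space of dimension $\mu_r+1$, and $S_{(\mu_r)}=\{(ax+by)^{\mu_r}\}$ is infinite, so $\ell_{(\mu_r)}\le\mu_r+2$ and hence $\ell_\mu\le\mu_r+2$.

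The only step needing genuine care is the choice of $h$: one must guarantee that each $f_i$ lands in $S_\mu$ itself rather than in a smaller stratum of $\bar S_\mu$, which is exactly why the roots of $h$ must avoid those of the $g_i$, and this freedom is available precisely because the configuration is finite. Everything else is a linear-algebra and definition chase.
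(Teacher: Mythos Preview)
Your argument is correct and follows essentially the same route as the paper: take a minimal dependence on $S_\nu$, multiply every form by a fixed auxiliary form with roots of multiplicities given by the complementary partition $\mu\setminus\nu$ and lying outside the finitely many roots of the $g_i$, and then specialize to $\nu=(\mu_r)$ for the ``in particular''. You are in fact more explicit than the paper about why the products stay pairwise non-proportional and land in $S_\mu$ rather than a smaller stratum; the paper simply says ``generic $a_j$'' and leaves these checks implicit.
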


\begin{proof} Given a subpartition $\nu=(\mu_{i_1}\ge \mu_{i_2}\ge \dots \ge \mu_{i_s})$ of a partition $\mu=(\mu_1\ge \mu_2 \ge \dots \ge \mu_r)$, let 
\begin{equation*}
f_{1}+f_2+\dots +f_{ \ell_{\nu}}=0,
\end{equation*} 
be a linear dependence of pairwise non-proportional  binary forms from $S_{\nu}$ realizing its secant degeneracy index. Take the partition $\widehat \mu=\mu \setminus \nu=(\widehat \mu_1\ge \widehat \mu_2 \ge  \dots \ge \widehat \mu_{r-s})$. Multiplying the latter equality by $\prod_{j=1}^{r-s} (x-a_jy)^{\widehat \mu_j},$ where $a_j$ are generic complex numbers, we get a linear dependence between polynomials in $S_\mu$. 
The inequality  $\ell_\mu\le \mu_r+2$ is a special case of the general inequality, if one chooses $\nu=(\mu_r)$.   
Observe  that for the partition $(d)\vdash d$, $\ell_{(d)}=d+2$,  since the set of   binary forms of degree $d$ with a root of multiplicity $d$ is a rational normal curve in $Pol_d\simeq \bC P^d$. 
\end{proof} 

\begin{example}{\rm 
The latter upper bound $\ell_\mu\le \mu_r+2$ is sharp in case of any partition with $\mu_r=1,$ but not in general. Namely,  
already for $\mu=(2^2) \vdash 4$, $\ell_\mu=3 < 4=\mu_2+2$.  For $\mu=(3^2)\vdash 6$, $\ell_\mu=4<5$; 
$\mu=(4^2)\vdash 8$, $\ell_\mu=4<6$, see \cite{Re}. }
\end{example}

\medskip
Before formulating general results about $\ell_\mu$, let us present several concrete classes of $\mu$ and some information about the corresponding $\ell_\mu$.

\begin{proposition}  
Let $\mu=(\mu_1\ge \mu_2 \ge \dots \ge \mu_r)$ be a partition with two different indices $i_1$ and $i_2$ such that $\mu_{i_1}-\mu_{i_1+1}=\mu_{i_2}-\mu_{i_2+1}=1$. Then, $\ell_\mu\le 4$.
\end{proposition}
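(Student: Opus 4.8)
The plan is to exhibit, for the given partition $\mu$, four pairwise non-proportional forms $f_1,f_2,f_3,f_4\in S_\mu$ that are linearly dependent. Four linearly dependent vectors span a linear subspace of dimension at most $3$, hence have projective span of dimension at most $2$, so such a quadruple of distinct points witnesses $\ell_\mu\le 4$ directly from the definition of the secant degeneracy index.

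The key observation is that a jump of size $1$ lets one transfer a single unit of multiplicity between two roots while staying inside $S_\mu$. Concretely, if $\mu_i-\mu_{i+1}=1$ and we assign the two multiplicities $\mu_i,\mu_{i+1}$ to two distinct roots with linear factors $A,B$, then the two resulting forms share the common factor $(AB)^{\mu_{i+1}}$ and differ only by one extra linear factor: an additional $A$ versus an additional $B$. Thus at each of the two prescribed jumps $i_1,i_2$ there is a binary choice of which root carries the extra factor, governed by a single linear form. I would fix generic pairwise distinct roots for all parts of $\mu$, absorb into a common form $g$ all the factors that are the same for every choice (in particular all parts of $\mu$ not involved in the two jumps), and record the remaining freedom by a degree-two cofactor; since only multiplicities are permuted among the affected roots, the multiset of multiplicities is preserved and every form produced lies in $S_\mu$.

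When the two jumps are non-adjacent ($i_2>i_1+1$) the four relevant roots are distinct, with linear factors $A_1,B_1$ at $i_1$ and $A_2,B_2$ at $i_2$; making the two binary choices independently produces the four forms $g\cdot A_1A_2$, $g\cdot A_1B_2$, $g\cdot B_1A_2$, $g\cdot B_1B_2$, all in $S_\mu$ and pairwise non-proportional. Their cofactors are binary quadratics, hence linearly dependent because $\dim\mathbb{C}[x,y]_2=3<4$; multiplying such a dependence by the fixed nonzero $g$ (which is injective on polynomials) yields the sought relation among $f_1,\dots,f_4$, and a single generic check, e.g. $A_1=x$, $B_1=y$, $A_2=x-y$, $B_2=x-2y$, confirms the relation is nondegenerate. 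When the jumps are adjacent ($i_2=i_1+1$) the middle root is shared, so only three roots $\ell,m,n$ carrying the consecutive multiplicities $a=\mu_{i_1}$, $a-1$, $a-2$ are available; here I would instead take $g$ equal to $\ell^{a-1}m^{a-2}n^{a-2}$ times the remaining fixed factors and use the four quadratic cofactors $\ell m$, $\ell n$, $m^2$, $n^2$, which again are four quadratics and hence dependent (for instance $(x+y)^2=xy+x(x+y)+y^2$ exhibits a nondegenerate relation with $\ell=x$, $m=y$, $n=x+y$).

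The main point requiring care is precisely this split into the adjacent and non-adjacent cases: the clean picture of two independent binary choices applies only when the four affected roots are genuinely distinct, and the adjacent case must be handled separately with the modified $g$ and the squared cofactors above. Beyond that, the argument reduces entirely to the elementary fact that any four binary quadratics are linearly dependent, together with the verification that the extracted cofactors are pairwise non-proportional (immediate, since the underlying linear factors are distinct), so that the four forms $f_i$ are indeed distinct points of $S_\mu$.
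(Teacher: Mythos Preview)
Your proof is correct and follows essentially the same route as the paper: the same split into adjacent versus non-adjacent jumps, and in each case the same four cofactors (namely $A_1A_2,\,A_1B_2,\,B_1A_2,\,B_1B_2$ in the non-adjacent case and $\ell m,\,\ell n,\,m^2,\,n^2$ in the adjacent case), reduced to the fact that four binary quadratics are linearly dependent. The only cosmetic difference is that the paper first passes to the subpartition $\nu$ consisting of the three or four affected parts and invokes the inequality $\ell_\mu\le\ell_\nu$, whereas you absorb the remaining parts of $\mu$ directly into the common factor $g$; the two are equivalent.
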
  

\begin{proof}
Without loss of generality, assume that $i_1<i_2$, and consider two different cases.

\smallskip
\noindent
Case 1. $i_2=i_1+1.$ Take a subpartition $\nu=(\mu_{i_1}, \mu_{i_1+1},\mu_{i_1+2})=(\mu_{i_1+2}+2, \mu_{i_1+2}+1,\mu_{i_1+2})$ and set $k=\mu_{i_1+2}$.  We know that $\ell_\mu\leq \ell_{\nu}$. So it is enough to prove that $\ell_{\nu}\leq 4$. Take three distinct complex numbers $p,q$ and $r$, and consider four polynomials 
$$g_1=(x-p)^{k+2}(x-q)^{k+1}(x-r)^{k}, \quad g_2=(x-p)^{k+2}(x-r)^{k+1}(x-q)^{k},$$ 
$$g_3=(x-q)^{k+2}(x-p)^{k+1}(x-r)^{k},\quad g_4=(x-r)^{k+2}(x-p)^{k+1}(x-q)^{k}.$$
A  linear combination $ag_1+bg_2+cg_3+dg_4$ is given by 
$$Q(x)(a(x-p)(x-q)+b(x-p)(x-r)+c(x-q)^2+d(x-r)^2),$$
where $Q(x)=(x-p)^{k+1}(x-q)^k(x-r)^k$. 
Polynomials $(x-p)(x-q)$, $(x-p)(x-r)$, $(x-q)^2$ and $(x-r)^2$ are linearly dependent. Thus  there exist  $a,b,c,d$ such that $ag_1+bg_2+cg_3+dg_4=0$. Hence 
$\ell_{\nu}\leq 4$.

\smallskip
\noindent
Case 2. $i_2>i_1+1$. Take a subpartition $\nu=(\mu_{i_1}, \mu_{i_1+1},\mu_{i_2}, \mu_{i_2+1})=(\mu_{i_1+1}+1, \mu_{i_1+1},\mu_{i_2+1}+1, \mu_{i_2+1})$; set $k_1=\mu_{i_1+1}$ and $k_1=\mu_{i_2+1}$.  We know that $\ell_\mu\leq \ell_{\nu}$. So it is enough to prove that $\ell_{\nu}\leq 4$. Take four distinct complex numbers $p,q,r$ and $t$, and consider four polynomials 
$$g_1=(x-p)^{k_1+1}(x-q)^{k-1}(x-r)^{k_2+1}(x-s)^{k_2}, \quad g_2=(x-q)^{k_1+1}(x-p)^{k-1}(x-r)^{k_2+1}(x-s)^{k_2},$$
 $$g_3=(x-p)^{k_1+1}(x-q)^{k-1}(x-s)^{k_2+1}(x-r)^{k_2},\quad g_4=(x-q)^{k_1+1}(x-p)^{k-1}(x-s)^{k_2+1}(x-r)^{k_2}.$$

\medskip
A linear combination $ag_1+bg_2+cg_3+dg_4$ is given by 
$$R(x)(a(x-p)(x-r)+b(x-q)(x-r)+c(x-p)(x-s)+d(x-q)(x-s)),$$
where $R(x)=(x-p)^{k_1}(x-q)^{k_1}(x-r)^{k_2}(x-r)^{k_2}$. 
Polynomials $(x-p)(x-r)$, $(x-q)(x-r)$, $(x-p)(x-s)$ and $(x-q)(x-s)$ are linearly dependent. Thus there exist $a,b,c,d$ such that $ag_1+bg_2+cg_3+dg_4=0$, and hence 
$\ell_{\nu}\leq 4$.
\end{proof}

\begin{definition}
{\rm By the {\it radical} of a given binary form we mean the binary form obtained as the  product of all distinct linear factors of the original form.} 
\end{definition} 

\begin{proposition}  For any partition $\mu= (\mu_1\ge \mu_2 \ge \dots \ge \mu_r)$ and  given an arbitrary positive integer $i$, consider the partition $\mu^\prime=(\mu_1+i \ge \mu_2+i \ge \dots \ge  \mu_r+i\ge i ,i,\dots, i )$, where the entry $i$  is repeated $r(\ell_{ \mu} -1)$ times at the end of $\mu^\prime$. Then, $\ell_{\mu^\prime} \le \ell_{\mu}$.
\end{proposition}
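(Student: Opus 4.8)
The plan is to start from a minimal secant degeneracy relation realizing $\ell_\mu$ and to upgrade it, by multiplication with a single auxiliary form, into a relation among pairwise non-proportional members of $S_{\mu'}$ of the same length. Concretely, write $\ell:=\ell_\mu$ and let
$$f_1+f_2+\dots+f_\ell=0$$
be a minimal secant degeneracy relation for $S_\mu$, with pairwise non-proportional $f_1,\dots,f_\ell\in S_\mu$. Each $f_k$ factors as $f_k=\prod_{j=1}^r(x-a_{k,j})^{\mu_j}$ with $r$ distinct roots $a_{k,1},\dots,a_{k,r}$. The whole point is that multiplying every $f_k$ by a common form $P$ leaves the linear relation intact, since $\sum_k Pf_k=P\sum_k f_k=0$, so it only remains to choose $P$ so that each product $g_k:=P f_k$ lands in $S_{\mu'}$.

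To see which $P$ works, I would read off what $\mu'$ demands: each $g_k$ must have its $r$ ``old'' roots $a_{k,j}$ carry multiplicity $\mu_j+i$, and must acquire exactly $r(\ell-1)$ further distinct roots, each of multiplicity exactly $i$. This is achieved by taking $P$ to be a product of distinct linear factors each raised to the power $i$. Let $A$ be the set of all distinct roots occurring among $f_1,\dots,f_\ell$, so that $r\le |A|\le r\ell$, and adjoin a set $B$ of $r\ell-|A|$ further points chosen disjoint from $A$, so that $|A\cup B|=r\ell$. Setting $P:=\prod_{a\in A\cup B}(x-a)^i$, a direct check of multiplicities shows that at each root $a_{k,j}$ of $f_k$ the form $g_k=P f_k$ has multiplicity $\mu_j+i$, while at each of the remaining $|A\cup B|-r=r(\ell-1)$ points of $A\cup B$ it has multiplicity exactly $i$. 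Hence $g_k\in S_{\mu'}$ for every $k$.

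Finally, since $P\neq 0$ and the $f_k$ are pairwise non-proportional, the forms $g_1,\dots,g_\ell$ are pairwise non-proportional as well; together with $\sum_k g_k=0$ this exhibits $\ell=\ell_\mu$ distinct, projectively dependent points of $S_{\mu'}$, whence $\ell_{\mu'}\le \ell_\mu$ by the definition of the secant degeneracy index.

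The step I expect to require the most care is the bookkeeping of multiplicities, and in particular the reason the statement asks for precisely $r(\ell_\mu-1)$ trailing copies of $i$. If the forms $f_k$ were guaranteed to have pairwise disjoint root sets we would automatically get $|A|=r\ell$ and $r(\ell-1)$ new roots for free; the genuine subtlety is that the minimal relation may force the $f_k$ to share linear factors, so $|A|$ can be strictly smaller than $r\ell$. The padding set $B$ of extra points is exactly what compensates for this collapse and guarantees that each $g_k$ picks up the full quota of $r(\ell-1)$ roots of multiplicity $i$, so that the multiplicity profile of $g_k$ matches $\mu'$ on the nose rather than being only dominated by it.
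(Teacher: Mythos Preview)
Your proof is correct and is essentially identical to the paper's: your set $A\cup B$ is precisely the root set of the paper's polynomial $g'$ (obtained by padding the radical $g$ of $f_1\cdots f_\ell$ with $r\ell-\deg g$ fresh linear factors), and your $P=\prod_{a\in A\cup B}(x-a)^i$ is exactly the paper's multiplier $(g')^i$. If anything, your discussion of why the padding set $B$ is needed to compensate for possible shared roots is more explicit than the paper's treatment.
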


\begin{proof}
Let $f_1,\dots,f_{\ell_\mu}$ be a solution of~\eqref{eq:SFMU}. 
Consider the radical $g$ of the polynomial $f_1  f_2 \dots f_{\ell_\mu}$. Since any form $f_j$ has exactly $r$ distinct roots, the degree of $g$ is  less than $r \ell_\mu$. 

Construct $g'$ as the product of $g$ by  $r \ell_\mu-deg(g)$ new distinct linear forms, and set  $f'_j = f_j \cdot (g')^i$, for  $j=1,\dots,  \ell_\mu$. 
It is easy to see that each $f'_j$ has the root partition given  by $\mu'$. 
Furthermore, one has $$f'_1+\dots+f'_{\ell_\mu}=(f_1+\dots+f_{\ell_\mu})\cdot (g')^i=0,$$
hence, $\ell_{\mu^\prime} \le \ell_{\mu}$.
\end{proof} 

\begin{corollary}
For any partition $\mu$ containing  the subpartition  $\nu = (t+1, t,t),$  where $t$ is any  positive integer, the  secant degeneracy index $\ell_\mu$ equals $3$. 
 More generally,  for   any  positive integer $t$, and any partition $\mu$ containing  the subpartition  $\nu = (t+i, \underbrace{t,t,\dots, t}_{i+1}),$     the secant degeneracy index $\ell_\mu$ is at most  $i+2$. 
\end{corollary}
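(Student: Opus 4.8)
The plan is to deduce both statements from the single inequality $\ell_\nu\le i+2$ for the subpartition $\nu=(t+i,\underbrace{t,\dots,t}_{i+1})$, by exhibiting an explicit common radical dependence. First I would invoke Proposition~\ref{prop2}: since $\mu$ contains $\nu$ as a subpartition, $\ell_\mu\le \ell_\nu$, so the bound $\ell_\nu\le i+2$ immediately yields $\ell_\mu\le i+2$. For the first assertion one specializes to $i=1$, so that $\nu=(t+1,t,t)$ and $\ell_\mu\le 3$; combining this with the universal lower bound $3\le \ell_V$ recorded earlier forces $\ell_\mu=3$.

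To produce the dependence I would fix $i+2$ pairwise distinct numbers $z_1,\dots,z_{i+2}$ and, for each $j$, place the large part $t+i$ at the root $z_j$, setting
\[
f_j=(x-z_j)^{t+i}\prod_{k\ne j}(x-z_k)^{t},\qquad j=1,\dots,i+2.
\]
Each $f_j$ has precisely the $i+2$ distinct roots $z_1,\dots,z_{i+2}$, with multiplicity $t+i$ at $z_j$ and $t$ elsewhere, so $f_j\in S_\nu$; and the $f_j$ are pairwise non-proportional because the large multiplicity sits at a different root in each case. They share the common factor $P(x):=\prod_{k=1}^{i+2}(x-z_k)^{t}$, and in fact $f_j=P(x)\,(x-z_j)^{i}$. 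Consequently
\[
\sum_{j=1}^{i+2} a_j f_j=P(x)\sum_{j=1}^{i+2} a_j\,(x-z_j)^{i},
\]
so $\sum_j a_j f_j=0$ is equivalent to the vanishing of the degree-$i$ polynomial $\sum_j a_j(x-z_j)^{i}$.

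The crux will be a pure dimension count: the $i+2$ polynomials $(x-z_1)^i,\dots,(x-z_{i+2})^i$ all lie in the $(i+1)$-dimensional space of polynomials of degree at most $i$, hence are linearly dependent, giving a nontrivial solution $(a_1,\dots,a_{i+2})$. This exhibits $i+2$ pairwise distinct, projectively dependent points of $S_\nu$, whence $\ell_\nu\le i+2$. To check that the relation genuinely involves all $i+2$ forms (so that for $i=1$ we really obtain three non-proportional collinear forms rather than merely two proportional ones), I would use that $z\mapsto (x-z)^i$ parametrizes a rational normal curve of degree $i$ in $\bC P^i$; any $i+1$ of its points are linearly independent by the corresponding Vandermonde determinant, so every nontrivial relation must have all $a_j\ne 0$. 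Rescaling $f_j\mapsto a_jf_j$ then turns the dependence into a minimal secant degeneracy relation $f_1+\dots+f_{i+2}=0$.

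The construction is short, so I expect no serious computational obstacle; the only care needed is to confirm that each $f_j$ really has root partition $\nu$ and not a coarser one (automatic here since the $z_k$ are distinct) and that the $f_j$ are distinct as points of $\bC P^{|\nu|}$. The genuinely load-bearing idea—the one I would emphasize—is that stripping off the common radical $P(x)$ collapses the entire question to linear dependence on the degree-$i$ rational normal curve, after which the inequality $\ell_\nu\le i+2$ is forced purely by the gap between the $i+2$ sample points and the dimension $i+1$ of the ambient space of degree-$i$ polynomials.
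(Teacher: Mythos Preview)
Your proof is correct and is essentially the paper's intended argument made explicit: the corollary in the paper is meant to follow by applying the preceding proposition to the one-part partition $(i)$ (for which $\ell_{(i)}=i+2$), and unwinding that proof yields exactly your forms $f_j=P(x)(x-z_j)^i$ with $P(x)=\prod_k(x-z_k)^t$. Your extra care in checking via the rational normal curve / Vandermonde that every coefficient $a_j$ is nonzero is a nice touch that the paper leaves implicit.
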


\medskip We finish this section with the proof of Theorem~\ref{minjump}.

\begin{proof}
 Both parts of Theorem~\ref{minjump} are settled in a similar way. Namely, 
given $\mu$, let $\{f_1,\ldots,f_{\ell}\}$   be a collection of forms solving either \eqref {eq:SFMU}   or \eqref {eq:SFMUb}. (In the first case $\ell=\ell_\mu$ and in the second case $\ell=\ell_{\bar \mu}$.) Assume that  $\{f_1,\ldots,f_{\ell}\}$ gives a counterexample to the statement. 
Denote by  $g$ the GCD of $\{f_1,\ldots,f_{\ell}\}$ and 
consider the relation $$\frac{f_1}{g}+\ldots+\frac{f_{\ell}}{g}=0.$$

In case (i) of Theorem~\ref{minjump}, for any $i$, every root of the polynomial $\frac{f_i}{g}$ has  multiplicity at least than $h_\mu$, because this multiplicity  equals $\mu_k-\mu_l$ for some $k\leq l$.  Observe that, for  all $k\le l$, $\mu_k-\mu_l$ is either $0$ or is greater than  or equal to $h_\mu$.  

In case (ii) of Theorem~\ref{minjump}, for any $i$, every root of the polynomial $\frac{f_i}{g}$ has  multiplicity not smaller than $h_{\bar \mu}$, because this multiplicity  equals $\sum_{k\in A} \mu_k-\sum_{l\in B}\mu_l$,  where $A$ and $B$ are two subsets of $\{1,\dots , r\}$.  Observe that, for  all pairs $(A',B')$ such that $A'\cap B'=\emptyset$, $|\sum_{k\in A'} \mu_k-\sum_{l\in B'}\mu_l|$  is either $0$ or is greater than  or equal to $h_{\bar\mu}$.  

\medskip
The rest of the proof is the same in both cases. In what follows, $h$ stands for $h_\mu$ in case (i) and for $h_{\bar \mu}$ in case (ii). 
Consider the sequence of Wronskians $$w_i=W\left(\frac{f_1}{g}, \ldots,\frac{f_{i-1}}{g},\frac{f_{i+1}}{g},\ldots,\frac{f_{\ell}}{g}\right),\; i=1,\dots, \ell.$$ 
 All these Wronskians are proportional to each other due to the latter relation. 

Let $\alpha$ be a  root of some $f_i$. There exists an index  $s$ such that $\frac{f_s}{g}$ is not divisible by $(x-\alpha)$, since otherwise $g$ is not the GCD. 

For any $t,$ consider the multiplicity of the root of $w_t$ at $\al$.  It satisfies the inequality:  
$$ord_\alpha (w_t)\ge \sum\left(ord_\alpha\left(\frac{f_j}{g}\right)\right)-(\ell-2)\#\left\{i:(x-\alpha)|\frac{f_i}{g}\right\},$$ 
because any column of the  Wronski matrix corresponding to $(x-\alpha)|\frac{f_j}{g}$ is divisible by $(x-\alpha)^{ord_\alpha{\left(\frac{f_j}{g}\right)}-\ell+2}$.

\medskip
Hence, $$\deg w_1  \ge \sum_{i=1}^{\ell} \left(\deg \left(\frac{f_i}{g}\right)-(\ell-2)\#_{roots}\left(\frac{f_i}{g}\right)\right)=$$ $$=\ell (|\mu|- \deg g )-(\ell-2)\sum_{i=1}^{\ell} \#_{roots}\left(\frac{f_i}{g}\right).$$
On the other hand, $$\deg w_1 \leq (\ell - 1)\left(\deg\left(\frac{f_i}{g}\right)-\ell +2\right)=(\ell-1) (|\mu|- \deg g)-(\ell-1)(\ell-2).$$
We obtain 
$$(\ell-1) (|\mu|- \deg g)-(\ell-1)(\ell-2)\ge \ell (|\mu|- \deg g)-(\ell-2)\sum_{i=1}^{\ell} \#_{roots}\left(\frac{f_i}{g}\right),$$
i.e., 
$$(\ell-2)\sum_{i=1}^{\ell} \#_{roots}\left(\frac{f_i}{g}\right)-(\ell-1)(\ell-2)\ge |\mu|- \deg g.$$

The number $\#_{roots}\left(\frac{f_i}{g}\right)$ of distinct roots  is at most 
$\frac{|\mu|- \deg g}{h}$, because each root has  multiplicity at least ${h}$. Thus 
 $$(\ell-2)(\ell-1)\frac{|\mu|- \deg g}{h_\mu}-(\ell-1)(\ell-2)\ge |\mu|- \deg g.$$
 Hence, $(\ell-2)(\ell-1)>{h}.$  
\end{proof}

\medskip
\section{Partitions with growing and  stabilising secant degeneracy index}
\label{sec:gr&st}

\begin{theorem}
\label{onlycomrad}
 If  $\mu=(\mu_1\ge \mu_2 \ge \dots \ge \mu_r)$ satisfies the inequality 
 $$m\leq \sqrt{\frac{\mu_r}{r-1}}+1,$$ then any solution of \eqref{eq:SFGEN} is a common radical solution.
\end{theorem}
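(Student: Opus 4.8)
The plan is to argue by contraposition. I assume a solution $f_1+\dots+f_m=0$ of \eqref{eq:SFGEN} is \emph{not} a common radical solution and aim to show $m>\sqrt{\mu_r/(r-1)}+1$, i.e. $(m-1)^2(r-1)>\mu_r$. Suppose to the contrary that $(m-1)^2(r-1)\le\mu_r$; this already forces $\mu_r\ge(m-1)^2>m-2$, which is what will let me factor columns of Wronskians without worrying about truncation at the roots of large multiplicity. As in the proof of Theorem~\ref{minjump} I first pass to the reduced forms: set $g=\gcd(f_1,\dots,f_m)$, $\tilde f_i=f_i/g$ and $D=\deg\tilde f_i=|\mu|-\deg g$, so that $\tilde f_1+\dots+\tilde f_m=0$ with $\gcd(\tilde f_i)=1$. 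I also reduce to the case where the relation is minimal (the $f_i$ span a space of dimension $m-1$), so that every Wronskian $w_s=W(\tilde f_1,\dots,\widehat{\tilde f_s},\dots,\tilde f_m)$ of $m-1$ reduced forms is nonzero and all the $w_s$ are proportional because $\sum_i\tilde f_i=0$.

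The two ingredients are a sharp degree bound and a root-by-root order bound for these Wronskians. For the upper bound, since $\{\tilde f_j\}_{j\ne s}$ are $m-1$ linearly independent polynomials of degree $\le D$, choosing a basis of their span with distinct pivot degrees yields $\deg w_s\le(m-1)D-(m-1)(m-2)$. For the lower bound I exploit the proportionality: at each root $\alpha$ of $\prod_i\tilde f_i$ there is an index $s$ with $\tilde f_s(\alpha)\ne0$ (because $\gcd(\tilde f_i)=1$), and evaluating $\mathrm{ord}_\alpha$ on that particular $w_s$ and pulling $(x-\alpha)^{\mathrm{ord}_\alpha\tilde f_j-(m-2)}$ out of each column gives $\mathrm{ord}_\alpha w_s\ge\sum_{j}\mathrm{ord}_\alpha\tilde f_j-(m-2)\,\#\{j:\tilde f_j(\alpha)=0\}$, where positive multiplicities are $\ge\mu_r\ge m-2$ at non-common roots and are harmlessly truncated at common roots.

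The heart of the matter is the bookkeeping when I sum the order bound over all roots and compare it with the degree bound. Write $c$ for the number of common roots (the roots of $g$); since the solution is not common radical each $f_i$ has exactly $r-c\ge1$ non-common roots, at each of which $\tilde f_i$ has multiplicity $\ge\mu_r$, whereas at a common root the multiplicity of $\tilde f_i$ is a difference of two parts of $\mu$ and may be small. The decisive observation is that at each common root the minimum $\min_i\mathrm{ord}_\alpha\tilde f_i$ is attained, so at most $m-1$ of the reduced forms actually vanish there; counting incidences, this replaces the naive total $mr$ of vanishing incidences by $m(r-c)+(m-1)c=mr-c$. Summing yields $\deg w_s\ge mD-(m-2)(mr-c)$, and combining with the upper bound gives $D\le(m-2)\bigl((m-1)(r-1)+\rho\bigr)$ with $\rho=r-c$. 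Finally $D\ge\rho\mu_r$ because each reduced form carries $\rho$ non-common roots of multiplicity $\ge\mu_r$, whence $\rho(\mu_r-(m-2))\le(m-2)(m-1)(r-1)$; as $\rho\ge1$ this gives $\mu_r\le(m-2)\bigl((m-1)(r-1)+1\bigr)<(m-1)^2(r-1)$, contradicting $(m-1)^2(r-1)\le\mu_r$.

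I expect the main obstacle to be exactly this final accounting: obtaining the sharp constant $(r-1)$ rather than a naive $r$ hinges on the fact that every common root fails to be a zero of at least one reduced form, together with the use of the sharp degree bound $(m-1)D-(m-1)(m-2)$ (not the cruder $(m-1)D-\binom{m-1}{2}$) and the per-root freedom to discard a form that does not vanish. Handling the small-multiplicity common roots, where the column-factoring estimate must be applied as $(\mathrm{ord}_\alpha\tilde f_j-(m-2))^{+}$ and the lost order reabsorbed into the same per-incidence count, and justifying the reduction to a minimal (hence Wronskian-nondegenerate) relation, are the two technical points that will require the most care.
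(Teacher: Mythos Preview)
Your argument is correct and lands on exactly the same final inequality $\mu_r\le(m-2)\bigl[(m-1)(r-1)+1\bigr]$ as the paper, but the route differs. The paper does \emph{not} divide by the gcd: it works with the Wronskians $w_t=W(f_1,\dots,\widehat{f_t},\dots,f_m)$ of the original forms, observes that the $j$-th column is divisible by $g_j:=\prod_k(x-a_{j,k})^{\mu_k-m+2}$, and then exploits a \emph{single} witness root~$\alpha$ (a root of some $f_p$ that is not a root of some $f_q$). Since $w_p$ and $w_q$ are proportional, each is divisible by $\mathrm{LCM}\bigl(\tfrac{\prod_i g_i}{g_p},\tfrac{\prod_i g_i}{g_q}\bigr)=\tfrac{\prod_i g_i}{\gcd(g_p,g_q)}$, which contributes an extra factor $(x-\alpha)^{\mu_r-m+2}$ beyond the generic divisor $\tfrac{\prod_i g_i}{g_p}$. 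Comparing this with the upper bound $(m-1)(|\mu|-m+2)$ already yields the inequality, with no incidence counting at all.

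Your version instead follows the template of the proof of Theorem~\ref{minjump}: pass to $\tilde f_i=f_i/g$, sum the local order estimate over all roots, and run an incidence count (the gain coming from your observation that at each common root at most $m-1$ of the $\tilde f_j$ vanish, so the total count drops from $mr$ to $mr-c$). This is more bookkeeping but perfectly valid; the paper's single-root LCM trick is simply a shorter way to extract the same extra $\mu_r-m+2$. The minimality reduction you flag (needed so the Wronskians are nonzero) is likewise implicit in the paper's own argument.
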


\begin{proof}
Assume the opposite. Let $\{f_1,\ldots,f_{m}\}$ be  a solution of \eqref{eq:SFMU} which is not a common radical solution. 
Let  $g$ be the GCD of $\{f_1,\ldots,f_{m}\}$.

For the term $f_i=c_i(x-a_{i,1})^{\mu_1}\cdots (x-a_{i,r})^{\mu_r}$, define 
$$g_i:=(x-a_{i,1})^{\mu_1-m+2}\cdots (x-a_{i,r})^{\mu_r-m+2}.$$
  Observe that $g_i$ is a polynomial, because any root of $f_i$ has  multiplicity at least $\mu_r>m$.

Consider the sequence of Wronskians 
$$w_i=W(f_1, \ldots,f_{i-1},f_{i+1},\ldots,f_{m}),\; i=1,\dots , m.$$
 They are proportional to each other, because $f_1+\ldots+f_m=0$. 
Notice that, for $i\neq t,$ the column in the Wronski matrix for $w_t$ corresponding to $f_i$  is divisible by $g_i$.  
Hence $w_t$ is divisible by ${\prod_{i=1}^{m} g_i}/{g_t}$.

\medskip
Since $\{f_1,\ldots,f_{m}\}$ is not a common radical solution,  there exists $\alpha\in \bC$, such that $\alpha$ is a root of $f_{p}$ but  is not a root of $f_q$ for some $p\neq q$.
  
Since the Wronskians $w_p$ and $w_q$  are proportional,  they are divisible by 
$$LCM\left(\frac{\prod_{i=1}^{m} g_i}{g_p},\frac{\prod_{i=1}^{m} g_i}{g_q}\right)=\frac{\prod_{i=1}^{m} g_i}{GCD(g_p,g_q)}=\frac{\prod_{i=1}^{m} g_i}{g_p}\frac{g_p}{GCD(g_p,g_q)}.$$
Then these Wronskians are divisible by $\frac{\prod_{i=1}^{m} g_i}{g_p}(x-\alpha)^{\mu_r-m+2}$. Therefore  their degrees are greater than or equal to  $$(m-1)(|\mu|-r(m-2))+\mu_r-m+2.$$
On the  other hand, the degrees  of the  Wronskians are at most $(m-1)(|\mu|-m+2).$ Thus, 
$$(m-1)(|\mu|-m+2)\geq (m-1)(|\mu|-r(m-2))+\mu_r-m+2,$$
which implies 
$-(m-1)(m-2)\geq -r(m-1)(m-2)+\mu_r-m+2.$ 
 After straightforward simplifications the latter inequality gives
$$m-1 \geq \sqrt{\frac{\mu_r}{r-1}}.$$
Contradiction.
\end{proof}

\begin{corollary}
For $\mu=(\mu_1\ge \mu_2 \ge \dots \ge \mu_r)$, 
 either $\ell_\mu\geq \sqrt{\frac{\mu_r}{r-1}}+1$ or any solution of \eqref{eq:SFMU} is a common radical solution.
\end{corollary}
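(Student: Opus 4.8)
The final statement is the Corollary immediately following Theorem~\ref{onlycomrad}, asserting that for any partition $\mu=(\mu_1\ge\dots\ge\mu_r)$, either $\ell_\mu\ge\sqrt{\mu_r/(r-1)}+1$ or every solution of \eqref{eq:SFMU} is a common radical solution.

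The plan is to derive this directly from Theorem~\ref{onlycomrad} by a dichotomy on the value of $\ell_\mu$. First I would set $m=\ell_\mu$, which is the minimal number of pairwise non-proportional forms in $S_\mu$ admitting a linear dependence; by definition a solution of \eqref{eq:SFMU} is exactly a solution of \eqref{eq:SFGEN} with $m=\ell_\mu$. Now I split into two cases according to whether the hypothesis of Theorem~\ref{onlycomrad} is met. If $\ell_\mu\le\sqrt{\mu_r/(r-1)}+1$, then taking $m=\ell_\mu$ in Theorem~\ref{onlycomrad} shows that every solution of \eqref{eq:SFGEN}, and in particular every solution of \eqref{eq:SFMU}, is a common radical solution; this is the second alternative of the Corollary. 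If instead $\ell_\mu>\sqrt{\mu_r/(r-1)}+1$, then the first alternative holds verbatim (and in fact with strict inequality, which implies the non-strict one claimed). Thus in either case at least one of the two stated alternatives holds.

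The only subtlety worth checking is the boundary: Theorem~\ref{onlycomrad} is stated with the non-strict inequality $m\le\sqrt{\mu_r/(r-1)}+1$, so the natural break point between the two cases is $\ell_\mu=\sqrt{\mu_r/(r-1)}+1$. I would handle this by using the negation of the Corollary's first alternative, namely $\ell_\mu<\sqrt{\mu_r/(r-1)}+1$, which certainly falls under the hypothesis $m\le\sqrt{\mu_r/(r-1)}+1$ of the theorem; then the theorem forces the common radical conclusion, so the first alternative failing implies the second. This phrasing avoids any ambiguity at equality. One should also note that the theorem requires $\mu_r>m$ for the polynomials $g_i$ to be well-defined, but when $\ell_\mu\le\sqrt{\mu_r/(r-1)}+1$ one has $m-1\le\sqrt{\mu_r/(r-1)}\le\sqrt{\mu_r}$, so $\mu_r\ge(m-1)^2\ge m$ for $m\ge2$, and the degenerate small cases cause no trouble.

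I do not expect any real obstacle here, since the Corollary is a formal logical consequence of Theorem~\ref{onlycomrad}: it simply contrapositively rephrases ``small $\ell_\mu$ forces common radical'' as the disjunction ``$\ell_\mu$ is large or the solution is common radical.'' The main point to articulate carefully is that $\ell_\mu$ is precisely the value of $m$ governing \eqref{eq:SFMU}, so that applying Theorem~\ref{onlycomrad} with $m=\ell_\mu$ is legitimate. The proof is therefore a one-line case distinction, and the bulk of the substance has already been expended in proving Theorem~\ref{onlycomrad} itself.
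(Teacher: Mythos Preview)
Your proposal is correct and matches the paper's approach: the corollary is stated immediately after Theorem~\ref{onlycomrad} with no separate proof, so the paper also regards it as the direct contrapositive/dichotomy you describe. Your extra care about the boundary case and the implicit condition $\mu_r>m$ is more than the paper spells out, but harmless.
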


\begin{remark}{\rm 
For any partition $\mu$ with a growing secant degeneracy index, i.e., for $\ell_{\mu^{\ldr}}\rightarrow \infty$,  we know that $$\sqrt{\frac{\mu_r+t}{r-1}}+1\leq \ell_{\mu^{\ldr}} \leq \mu_r+t+2,$$ 
see Proposition~\ref{prop2}  and Theorem~\ref{minjump}. }
\end{remark}

\medskip

\medskip
Now we present a sufficient condition for $\mu$ to have a growing secant degeneracy index.

\begin{corollary}
Any partition $\mu=(\mu_1\ge \mu_2\ge \dots \ge \mu_r)$, such that every its  jump is at least $(r!)^2$, has a growing secant degeneracy index.
\end{corollary}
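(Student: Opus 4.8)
The plan is to derive this as a direct consequence of Theorem~\ref{minjump}(i). The key observation is that the statement claims a growing secant degeneracy index under a lower bound on \emph{every} jump of $\mu$, and Theorem~\ref{minjump} already provides a lower bound on $\ell_\mu$ in terms of the \emph{minimal} jump $h_\mu$. So first I would recall that for the shifted partition $\mu^{\ldr}=(\mu_1+t\ge \dots \ge \mu_r+t)$, the jump multiset behaves in a controlled way: the consecutive differences $\mu_i-\mu_{i+1}$ are unchanged by the shift, while the last entry $\mu_r$ becomes $\mu_r+t$. Hence the minimal jump satisfies $h_{\mu^{\ldr}}\ge \min(h_\mu,\,\mu_r+t)$, and in particular $h_{\mu^{\ldr}}\ge h_\mu$ for all $t\ge 0$ once $\mu_r+t\ge h_\mu$, which holds trivially.

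The heart of the argument is then purely quantitative. Applying Theorem~\ref{minjump}(i) to $\mu^{\ldr}$ gives
\begin{equation*}
\ell_{\mu^{\ldr}} > \sqrt{h_{\mu^{\ldr}}+\tfrac14}+\tfrac32 \ge \sqrt{h_\mu+\tfrac14}+\tfrac32.
\end{equation*}
This lower bound is valid for all $t$ but is itself a constant, so it does not by itself force $\ell_{\mu^{\ldr}}\to\infty$. The remaining content is to show the limit is infinite, not merely bounded below. To do this I would invoke Proposition~\ref{pr:general}: a partition has a stabilising secant degeneracy index if and only if, for some $m$, there exists a common radical solution of \eqref{eq:SFGEN}. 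Equivalently, $\mu$ has growing index precisely when no common radical solution exists for any $m$. So the strategy is to rule out common radical solutions entirely under the hypothesis that every jump is at least $(r!)^2$.

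To exclude common radical solutions, I would use Theorem~\ref{onlycomrad} in contrapositive together with the structure of the jump condition. The cleanest route is to show that the hypothesis forces $\ell_{\mu^{\ldr}}$ to grow by bounding $h_{\mu^{\ldr}}$ below by a quantity tending to infinity. Here the precise reading of the hypothesis matters: if every jump, including the last entry $\mu_r$, is at least $(r!)^2$, then after shifting, every jump of $\mu^{\ldr}$ is at least $(r!)^2$ as well, and the jump coming from the last part grows like $\mu_r+t$. The key point I would establish is that when all jumps are large, the $Sym_r$-orbit of any $f\in S_\mu$ spans a space of the maximal possible dimension $r!/(m_1!\cdots m_s!)$, which by Proposition~\ref{pr:general} is exactly the criterion for growing index. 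Large jumps guarantee that the distinct multiplicities $\mu_1>\mu_2>\cdots$ are spread far apart, and I would argue via a Wronskian/Vandermonde-type nondegeneracy estimate—paralleling the proof of Theorem~\ref{onlycomrad}—that such spreading prevents any linear dependence among orbit elements from collapsing the radical.

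The main obstacle I anticipate is making the final nondegeneracy step rigorous: one must show that the bound $(r!)^2$ on every jump is exactly what is needed to guarantee the orbit spans the full multinomial dimension. The factor $(r!)^2$ strongly suggests the argument tracks the $r!$ permutations of the roots twice—once for a Vandermonde-type determinant in the root positions and once for a gap condition among the multiplicities that keeps the associated monomials linearly independent. I would expect the proof to reduce to showing a certain generalized Wronskian or confluent Vandermonde matrix, indexed by the $Sym_r$-orbit, is nonsingular when consecutive multiplicity gaps exceed $(r!)^2$; this matrix nonvanishing is the technical crux, and estimating it is where the bulk of the genuine work lies.
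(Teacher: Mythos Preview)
You have assembled exactly the right ingredients—Theorem~\ref{minjump}(i) and Proposition~\ref{pr:general}—but you overlook the one-line counting observation that finishes the proof, and instead propose a much harder Wronskian computation that is not needed.

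Here is the missing step. From Theorem~\ref{minjump}(i) and the hypothesis $h_\mu\ge (r!)^2$ you already obtain the constant bound
\[
\ell_{\mu} \;>\; \sqrt{(r!)^2+\tfrac14}+\tfrac32 \;>\; r!.
\]
Now a common radical solution consists of pairwise non-proportional forms in $S_\mu$ sharing the same $r$ distinct roots; each such form is determined up to scalar by how the multiplicities $\mu_1,\dots,\mu_r$ are assigned to those roots, so there are at most $r!$ of them. Any common radical solution would therefore be a linear dependence among at most $r!$ points of $S_\mu$, contradicting $\ell_\mu>r!$. Hence no common radical solution exists, and Proposition~\ref{pr:general} gives the growing index. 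This is precisely the paper's argument, and it explains the threshold $(r!)^2$: it is chosen so that the square-root bound in Theorem~\ref{minjump} just clears $r!$.

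Your dismissal of the constant lower bound as ``not by itself forc[ing] $\ell_{\mu^{\ldr}}\to\infty$'' is where you go astray: that constant is exactly $r!$, and combined with the trivial cardinality bound on a common-radical family it already excludes common radical solutions. The confluent Vandermonde/Wronskian nondegeneracy you propose as the ``technical crux'' is entirely unnecessary; moreover, carrying it out would require handling \emph{all} root configurations (Proposition~\ref{pr:general} demands full orbit dimension for every $f\in S_\mu$, not just generic ones), which is strictly harder than the counting argument you skipped.
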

\begin{proof}
Assume that $\ell_{\mu^{\ldr}}$ does not grow to infinity.
 Then by Theorem~\ref{minjump}, $$\ell_{\mu^{\ldr}}\geq \sqrt{{h_\mu^t}+\frac{1}{4}}+\frac{3}{2}\geq \sqrt{{h_\mu^0}+\frac{1}{4}}+\frac{3}{2}\geq \sqrt{{(r!)^2}+\frac{1}{4}}+\frac{3}{2}>r!.$$
 However the number of different polynomials (up to a constant factor) with fixed $r$ roots of multiplicities $\mu^{\ldr}$ is at most $r!$. Hence  no common radical solution can exist. Contradiction.
\end{proof}

We continue with the proof of Theorem~\ref{prop:parking}. 

\begin{proof}
Let $f=(x-c_1y)^{\mu_1}\cdot(x-c_2y)^{\mu_2}\cdot\ldots\cdot(x-c_ry)^{\mu_r}$ be any form from $S_\mu$. 
Consider the set $\cD_{(a_1,\dots,a_r)}$ of permutations of the multiset $\tau_\mu$ satisfying the  asumptions  (i) and (ii) of Theorem~\ref{prop:parking}.
For any $\pi\in \cD_{(a_1,\dots,a_r)}$, define $f_\pi$ as the form from the $Sym_r$-orbit of $f$ corresponding to $\pi$. 
Any such form $f_\pi$ is divisible by $g=(x-c_1y)^{a_1}\cdot(x-c_2y)^{a_2}\cdot\ldots\cdot(x-c_ry)^{a_r}$, because $\pi$ satisfies the above assumptions. 

For any $\pi \in \cD_{(a_1,\dots,a_r)},$ define  $\hat{f}_\pi:=\frac{f_\pi}{g}\in S_{|\mu|-\sum_{i=1}^r{a_i}}$. If $|\cD_{(a_1,\dots,a_r)}|\geq |\mu|-\sum_{i=1}^r{a_i}+2$, then the forms $\hat{f}_\pi $ are linearly dependent. Therefore, the forms $f_\pi$, where $\pi$ runs over $\cD_{(a_1,\dots,a_r)}$, are also linearly dependent. 
\end{proof}

The next proposition shows that if there are many jumps of small sizes, then the secant degeneracy  index is bounded. 

\begin{proposition}
Let $d$ be a positive integer greater than $45$.
For a partition $\mu=(\mu_1\ge \mu_2 \ge \dots \ge \mu_r)$, if the number of jumps of sizes less than or equal to $d$ is at least $2(\log d+\log \log d+2)$, then 
$\ell_\mu\leq d(\log d+\log \log d )+2$. (Here by $\log$ we mean the binary logarithm, i.e. the logarithm  with base $2$.)
\end{proposition}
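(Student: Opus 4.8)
The plan is to produce an explicit short secant degeneracy relation for $S_\mu$ of common radical type, following the mechanism already used in Theorem~\ref{prop:parking} and in the proof of Theorem~\ref{onlycomrad}: fix distinct roots $c_1,\dots,c_r$, work inside the $Sym_r$-orbit of a single form $f=\prod_i (x-c_iy)^{\mu_i}$, and force a linear dependence by exhibiting many orbit members that share a large common factor. Concretely, I would choose a threshold vector $(a_1,\dots,a_r)$ with $0\le a_i\le \mu_i$, set $g=\prod_i (x-c_iy)^{a_i}$, and retain exactly those permuted forms $f_\pi$ divisible by $g$. After dividing by $g$, each retained form $f_\pi/g$ has degree $D:=|\mu|-\sum_i a_i$ and hence lies in a fixed $\mathbb{P}^D$; as soon as the number of retained forms is at least $D+2$ they are projectively dependent, and this dependence lifts back (multiply by $g$) to a relation \eqref{eq:SFMU} in $S_\mu$, giving $\ell_\mu\le D+2$. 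Thus the entire problem reduces to choosing thresholds so that, simultaneously, $D\le d(\log d+\log\log d)$ and the number of dominating permutations is at least $D+2$.

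The small jumps are precisely what make such thresholds available. At a jump $\mu_p-\mu_{p+1}=\delta\le d$ one may lower the two corresponding thresholds to the common value $\mu_{p+1}$: this costs only $\delta\le d$ in the excess degree $D$, yet it introduces a genuine binary choice (swap the two roots or not) among the retained forms. I would select the small jumps in a dyadic fashion and use them to build up the family of retained forms through roughly $\log d+\log\log d$ successive doublings, so that the number of forms reaches the order of $2^{\log d+\log\log d}=d\log d$ while the accumulated excess degree stays of order $d(\log d+\log\log d)$. The factor $2$ together with the additive constant in the hypothesis $2(\log d+\log\log d+2)$ are there to guarantee that enough small jumps are present to run all the required doublings with a little room to spare, and the binary logarithm appears exactly because each exploited jump contributes one bit of choice, via Proposition~\ref{prop2}-style reduction to a subpartition carrying a controlled common factor.

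The main obstacle is the quantitative matching of the two competing quantities: the number of retained forms must strictly exceed the dimension $D+1$ of the space they span, while $D$ itself is being driven up by every jump we exploit. I would make this precise by writing the count of dominating permutations as a banded-permanent product $\prod_i(e_i+1)$, where $e_i$ records how far each threshold is dropped, against the linear cost $D=\sum_i \delta_i e_i\le d\sum_i e_i$, and then optimizing the allocation $(e_i)$. A uniform doubling ($e_i\in\{0,1\}$) already comes close but is borderline in the degree budget, so the delicate point is to arrange the allocation — exploiting that many of the exploited jumps may have size strictly less than $d$, and that the permanent is super-multiplicative when several jumps sit at a common level — so that $\prod_i(e_i+1)\ge D+2$ is retained within $D\le d(\log d+\log\log d)$. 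This balancing is where the hypothesis $d>45$ is consumed: for small $d$ the doublings cannot outrun the linear growth of $D$, whereas for $d$ large enough the comparison between $2^{\log d+\log\log d}$ and $d(\log d+\log\log d)$ supplies the necessary slack. Everything else — checking that the retained forms are pairwise non-proportional, that the reduced forms share the claimed common degree, and that the dependence descends to $S_\mu$ — is routine and mirrors the computations in the proofs of Theorem~\ref{onlycomrad} and Theorem~\ref{prop:parking}.
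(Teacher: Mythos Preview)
Your plan is the paper's plan: apply Theorem~\ref{prop:parking} by lowering thresholds at small jumps so that each one contributes a binary swap. Where you make life harder than necessary is the counting. The paper's single clean trick is to retain only \emph{every second} small jump; this forces the selected positions $j_1<\cdots<j_t$ (with $t=\lfloor\log d+\log\log d+2\rfloor$) to satisfy $j_{i+1}>j_i+1$, so the transpositions $(j_i,\,j_i+1)$ are pairwise disjoint and the dominating permutations number \emph{exactly} $2^t$. Only the one threshold $a_{j_i}$ is dropped to $\mu_{j_i+1}$ (not two), so $D\le dt$, and the required inequality $dt+2\le 2^t$ is the direct chain
\[
dt+2\;\le\; d(\log d+\log\log d+3)\;\le\; 2d\log d \;=\; 2^{\log d+\log\log d+1}\;\le\; 2^{t},
\]
which is precisely where $d>45$ is consumed. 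No banded--permanent optimization is needed, and you cannot assume any exploited jump has size strictly below $d$ (they may all equal $d$). The factor $2$ in the hypothesis is not generic slack: it is exactly what survives the every-second thinning so that $t$ separated jumps remain.
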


\begin{proof}
Assume that  there are at least $2(\log d+\log\log d)+2)$ such jumps.
Consider every second such jump; the number of these jumps is at least $t=[\log d+\log\log d+2]$.
Assume that  they occupy positions $j_1<\ldots <j_t$, i.e. $(\mu_{j_i}-\mu_{j_i+1})\leq d$, for $i\in[1,t]$. Furthermore $j_i+1<j_{i+1}$, because there is a nontrivial jump between them.

Consider the set of  permutations of $\mu=\{\mu_1,\ldots,\mu_r\}$ such that: 
\begin{itemize}
\item  $i\notin \{j_1,\dots,j_t\}$, $\pi_i\geq \mu_i$;
\item  $i\in \{j_1,\dots,j_t\}$, $\pi_i\geq \mu_{i+1}$.
\end{itemize}

The number of such permutations is $2^t$. By Theorem~\ref{prop:parking}, there is a solution of the size $\sum_{i=1}^t(\mu_{j_i}-\mu_{j_i+1})+2\leq d\cdot t + 2$, because
$$d\cdot t+2 \leq d\cdot(\log d+\log \log d+2)+2\leq
 d\cdot (\log d+\log \log d+3 )\leq$$
 $$\leq d\cdot 2\cdot \log d\leq 2^{\log d+\log \log d+1}\leq 2^{t}$$
which finishes the proof.
\end{proof}

\medskip
\subsection{Examples}

It is rather obvious that all partitions with two parts have growing secant degeneracy index. Indeed, if there exists a common radical solution of \eqref{eq:SFGEN}, then its length $m$ is smaller than or equal to $r!$ which in case of two parts  equals $2$.

\begin{proposition}\label{prop:2parts} \rm {(i)} For partitions $\mu=(p,2)$,  one has that $\ell_\mu=3$ when $p=2,$ and $\ell_\mu=4$ when $p>2$. 

\noindent 
\rm {(ii)} For partitions $\mu=(p,3)$, one has that $\ell_\mu=4$ when $p=3,4,5$ and $\ell_\mu=5$ when $p>7$. Cases $\mu=(6,3)$ and $\mu=(7,3)$ are still open.
\end{proposition}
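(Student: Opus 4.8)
The plan is to combine the general upper bound of Proposition~\ref{prop2} with two flavours of lower-bound arguments and a few explicit constructions. Proposition~\ref{prop2} gives $\ell_{(p,2)}\le \mu_r+2=4$ and $\ell_{(p,3)}\le 5$ for free, and $\ell_\mu\ge 3$ always holds. Hence in part~(i) only the value at $p=2$ and the lower bound $\ell\ge 4$ for $p>2$ remain, while in part~(ii) I must in addition produce the sharper upper bound $\ell\le 4$ in the three cases $p=3,4,5$ and the sharper lower bound $\ell\ge 5$ for $p>7$.

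The engine for every lower bound is a Wronskian degree count, refined beyond Theorem~\ref{minjump}. Suppose $\sum_{i=1}^m c_i f_i=0$ with pairwise non-proportional $f_i=(x-a_iy)^p(x-b_iy)^q\in S_{(p,q)}$, all $c_i\neq 0$, and no proper subsum vanishing (a vanishing $3$-term subsum would violate the bound $\ell\ge 4$, which I establish first, so the $m=4$ step is licensed and all Wronskians below are nonzero). I first treat the generic case in which all $2m$ roots are distinct; coincidences are handled separately (see below). The complementary $(m-1)\times(m-1)$ Wronskians $w_i:=W(f_1,\dots,\widehat{f_i},\dots,f_m)$ are pairwise proportional because of the relation. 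On one hand, the column of $w_i$ coming from $f_t$ is divisible by $d_t:=(x-a_ty)^{p-(m-2)}(x-b_ty)^{q-(m-2)}$; combining this over all $i$ via proportionality (and coprimality of distinct $d_t$) shows $w_1$ divisible by $\prod_{t=1}^m d_t$, so $\deg w_1\ge m\bigl((p+q)-2(m-2)\bigr)$. On the other hand, writing $w_1=\bigl(\prod_{t\neq 1}f_t\bigr)\cdot\det M$ with $M$ built from the differences $L_t-L_s$ of the logarithmic derivatives $L_t=f_t'/f_t$, the crucial point is that each such difference is $O(x^{-2})$ at infinity, since its residues sum to $p+q-p-q=0$ (all $f_t$ share the same degree). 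An elementary computation shows $\det M$ decays strictly faster than the naive product of these orders, sharpening the usual Wronskian degree bound.

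Carrying this out in the two relevant lengths produces both thresholds exactly. For $m=3$ one has $\det M=L_3-L_2=O(x^{-2})$, giving $\deg w_1\le 2(p+q)-2$ against the lower bound $3(p+q)-6$; these are incompatible precisely when $p+q>4$, which kills three-term relations for $(3,2)$, for $(p,2)$ with $p\ge 4$, and for all of $(3,3),(4,3),(5,3)$, leaving $(2,2)$ (where $p+q=4$) as the only two-part stratum that can carry one. For $m=4$ and $q=3$, $\det M$ is the $2\times2$ Wronskian of two functions that are $O(x^{-2})$, hence $O(x^{-6})$, so $\deg w_1\le 3(p+3)-6=3p+3$, while divisibility gives $\deg w_1\ge 4(p-1)$; these conflict exactly when $p>7$, proving $\ell_{(p,3)}\ge 5$ there. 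The estimate is tight at $p=6,7$, which is precisely why those two cases remain open.

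It remains to supply the constructions. For $(2,2)$ the identity $(x^2+y^2)^2+(2\mathrm{i}xy)^2+\bigl(\mathrm{i}(x^2-y^2)\bigr)^2=0$ exhibits three pairwise non-proportional squares in $S_{(2,2)}$ summing to zero, whence $\ell_{(2,2)}=3$. For $p=3,4,5$ in part~(ii) I would exhibit explicit four-term relations realizing $\ell\le 4$; the case $(3,3)$, where the forms are cubes of quadratics, is already in \cite{Re}, and for $(4,3)$ and $(5,3)$ one solves the linear system defining a four-secant plane of the corresponding surface in $\bP^{p+3}$ (equivalently, an apolarity condition). Finally, the degenerate configurations—where the $f_i$ share factors or have coinciding roots—must be dispatched: such coincidences only raise the order of vanishing of the $w_i$ while leaving the decay-at-infinity bound untouched, so they make the inequalities above even more favourable, apart from a few small base cases (e.g. relations reducing to distinct powers of linear forms, which lie on a rational normal curve and are therefore independent). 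The main obstacle is the tightness of the Wronskian estimate at $p=6,7$: improving the upper bound on $\deg w_1$ below $3p+3$, or settling by an explicit construction whether a four-term relation exists for $(6,3)$ and $(7,3)$, is exactly what the present method cannot decide.
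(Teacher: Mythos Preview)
Your Wronskian lower-bound argument is correct and actually supplies more than the paper does. The paper's own proof of this proposition consists \emph{only} of two explicit four-term identities, one for $(4,3)$ and one for $(5,3)$; it does not spell out any of the lower bounds. Neither Theorem~\ref{minjump} nor Theorem~\ref{onlycomrad} is sharp enough to yield $\ell_{(3,2)}\ge 4$, $\ell_{(4,3)}\ge 4$, or $\ell_{(p,3)}\ge 5$ for $p>7$, so your refinement---using that $L_t-L_s=O(x^{-2})$ and the extra cancellation this forces in the $2\times 2$ Wronskian of two $O(x^{-2})$ functions---is genuinely needed, and your thresholds $p+q>4$ and $p>7$ come out exactly right.

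Conversely, what the paper provides and you do not are the actual constructions. For $(4,3)$ it writes down a relation in which all four forms share the root $y=0$, namely $y^3(x+y)^4-y^3x^4=L(x+ay)^3y^4+(1-L)(x+by)^3y^4$ with $a=3-\sqrt3$, $b=3+\sqrt3$, $L=(9-5\sqrt3)/18$; for $(5,3)$ it uses a symmetric ansatz $f_c=(x+c^5y)^3(x+c^{-3}y)^5$ and exhibits a specific value of $c$. Saying ``one solves the linear system'' is not yet a proof: you must check that the solution is nontrivial and that the resulting forms lie in $S_\mu$ rather than in a smaller stratum. Your treatment of coincident roots is also too quick. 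When some $f_i$ share a root the factors $d_t$ are no longer pairwise coprime, so the bound $\deg w_1\ge\sum_t\deg d_t$ is no longer automatic; one has to either track the order of vanishing at the shared root directly, or first divide through by the GCD and rerun the count as in the proof of Theorem~\ref{minjump}. This is routine but it is a genuine case analysis you have not carried out.
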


\begin{proof}
In case $\mu=(3,4)$ we found an  example: 
$$y^3(x+y)^4-y^3x^4=L(x+ay)^3y^4+(1-L)(x+by)^3y^4,$$

\noindent where $a=3-\sqrt{3}$, $b=3+\sqrt{3}$ and $L=\frac{9-5\sqrt{3}}{18}$; 

In case $\mu=(5,3)$ we found an example:  
$$f_1+f_2-f_3-f_4=0,$$

\noindent where $f_1(x)=(x+c_1^5y)^3(x+c_1^{-3}y)^5$;  $f_2(x)=(x+c_2^5y)^3(x+c_2^{-3}y)^5$;  $f_3(x)=(x+c_1^{-5}y)^3(x+c_1^{3}y)^5$;  $f_4(x)=(x+c_2^{-5}y)^3(x+c_2^{3}y)^5$. Here $$c_1=-c_2= \left( \frac{1+i\sqrt{35}}{6} \right)^{\frac{1}{4}}.$$  
\end{proof}

 Using computer algebra packages we were able to prove the following statement. 

\begin{proposition} \label{3-parts} For a partition $\mu$ with  three parts $a\le b \le c,$ the following three conditions are equivalent: 

\noindent
{\rm (i)} $\mu$ has a stabilising secant degeneracy index;

\noindent
{\rm (ii)} $\mu$ has a strongly stabilising secant degeneracy index;

\noindent
{\rm (iii)} the triple  $a\le b \le c$ belongs to one of the following three types: $a=b$, $c=a+1$; $b=a+1$, $c=a+2$;   $b=a+1$, $c=a+3$.

\end{proposition}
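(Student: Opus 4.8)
The plan is to prove the equivalence $\mathrm{(i)}\Leftrightarrow\mathrm{(ii)}\Leftrightarrow\mathrm{(iii)}$ for three-part partitions $\mu=(c\ge b\ge a)$ by combining the combinatorial criterion of Theorem~\ref{prop:parking} (and its suspected converse, Conjecture~\ref{conj:converse}) with the quantitative bounds of Theorem~\ref{minjump} and Theorem~\ref{onlycomrad}. Since $\mathrm{(ii)}\Rightarrow\mathrm{(i)}$ is immediate from the definitions (a strongly common radical solution is in particular a common radical solution, which by Proposition~\ref{pr:general} forces stabilisation), the real content lies in $\mathrm{(iii)}\Rightarrow\mathrm{(ii)}$ and in $\mathrm{(i)}\Rightarrow\mathrm{(iii)}$, i.e.\ showing that \emph{only} the three listed families stabilise and that each of them admits a strongly common radical solution.

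First I would dispose of the easy direction $\mathrm{(iii)}\Rightarrow\mathrm{(ii)}$ by producing, for each of the three types, an explicit sequence $\{a_1,a_2,a_3\}$ verifying the hypothesis of Theorem~\ref{prop:parking}. For a three-part partition there are at most $3!=6$ distinct permutations of the multiset $\tau_\mu$, so the inequality $|\cD_{(a_1,a_2,a_3)}|\ge |\mu|-\sum a_i+2$ is a finite check: one chooses the $a_i$ just below the relevant entries so that enough permutations $\pi$ satisfy $(\pi\circ\mu)_i\ge a_i$ while keeping $\sum a_i$ close to $|\mu|$. For instance, in the type $a=b$, $c=a+1$ the multiset has repeated small parts, which makes several permutations coincide and keeps the count of valid permutations high relative to $|\mu|-\sum a_i+2$; analogous tuned choices handle $b=a+1$, $c=a+2$ and $b=a+1$, $c=a+3$. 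By Theorem~\ref{prop:parking} each such tuple yields a strongly common radical solution, giving $\mathrm{(ii)}$.

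The hard part will be $\mathrm{(i)}\Rightarrow\mathrm{(iii)}$: ruling out stabilisation for every three-part partition not of the listed form. Here the strategy is to show that outside the three families the minimal jump $h_\mu$ (or the jumps of the relevant $\mu^{\ldr}$) is forced large enough that Theorem~\ref{minjump}(i) pushes $\ell_{\mu^{\ldr}}$ above $r!=6$, while the number of distinct forms with fixed roots of multiplicities $\mu^{\ldr}$ is at most $3!=6$; as in the Corollary following the Theorem~\ref{onlycomrad} argument, this contradicts the existence of any common radical solution and hence forces growth via Proposition~\ref{pr:general}. Concretely I would parametrise a three-part partition by its two jumps $j_1=c-b$, $j_2=b-a$ together with $a$, observe that the listed types are exactly the small-jump configurations $(j_1,j_2)\in\{(1,0),(1,1),(3,1)\}$ up to the tail value $a$, and argue that any other jump pattern makes some jump of $\mu^{\ldr}$ grow linearly in $t$ (since translating by $t$ fixes all internal jumps and only affects the tail jump $a+t$). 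The delicate point is the boundary cases where one jump is small but the combinatorics of the $6$ permutations is borderline; these must be eliminated by a direct, finite analysis of the parking-type inequality rather than by the crude counting bound, and this is where the computer-algebra verification referenced before the proposition is genuinely needed.

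The main obstacle, therefore, is not the asymptotic elimination of large-jump partitions—that follows cleanly from Theorem~\ref{minjump} and the counting bound—but the sharp treatment of the finitely many ``borderline'' jump patterns adjacent to the three stabilising families, where the gap between the $\ell$-lower bound and $6$ is too small for the crude argument. For these I expect to need either an exact determination of the dimension of the linear span of the $Sym_3$-orbit (the second criterion in Proposition~\ref{pr:general}), computed case by case, or an explicit demonstration that no nontrivial linear dependence among the at most six orbit forms exists for those patterns. Establishing these few dimension computations rigorously, ideally by hand to complement the computer-algebra evidence, is the step I anticipate will require the most care.
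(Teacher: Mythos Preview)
Your direction $\mathrm{(iii)}\Rightarrow\mathrm{(ii)}$ via Theorem~\ref{prop:parking} is fine: with $a_1=a_2=a_3=a$ one checks $3\ge 3$, $6\ge 5$, $6\ge 6$ in the three families respectively. (Minor slip: the jump pair for the third type $b=a+1,\ c=a+3$ is $(c-b,b-a)=(2,1)$, not $(3,1)$.)

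The genuine gap is in $\mathrm{(i)}\Rightarrow\mathrm{(iii)}$. Your own parenthetical already undermines the key step: translation by $t$ fixes the internal jumps $c-b$ and $b-a$ and moves only the tail jump $a\mapsto a+t$. Hence $h_{\mu^{\ldr}}=\min(c-b,\,b-a,\,a+t)$ is \emph{bounded} whenever one internal jump is small, and Theorem~\ref{minjump} never pushes $\ell_{\mu^{\ldr}}$ above $6$ in that case. Thus the ``asymptotic elimination'' leaves not finitely many borderline patterns but infinitely many, e.g.\ $(c-b,b-a)=(k,1)$ for every $k\ge 3$, or $(0,k)$ for $k\ge 2$. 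Your closing plan---compute the dimension of the $Sym_3$-orbit span case by case---cannot terminate.

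There is a second gap hidden in the same step. Showing that the $Sym_3$-orbit is linearly independent for a \emph{generic} triple of roots only excludes a \emph{strongly} common radical solution, i.e.\ rules out $\mathrm{(ii)}$, not $\mathrm{(i)}$: a priori some special triple of roots could still produce a dependence. The paper closes this gap separately and cheaply: since $PGL_2$ acts transitively on ordered triples of distinct points of $\bC P^1$, a common radical solution for \emph{one} triple of roots gives one for \emph{every} triple, so $\mathrm{(i)}\Leftrightarrow\mathrm{(ii)}$ for three-part partitions. You should insert this M\"obius argument; without it your orbit-dimension computations (even if they could be made finite) would not touch $\mathrm{(i)}$.

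For $\mathrm{(ii)}\Leftrightarrow\mathrm{(iii)}$ the paper avoids any case enumeration over jump patterns. It forms the Wronskian $W$ of the six orbit elements of $(t-x_1)^a(t-x_2)^b(t-x_3)^c$ with $a,b,c$ kept \emph{symbolic}; after dividing out the obvious common factor, the coefficients of $W$ are polynomials in $a,b,c,x_1,x_2,x_3,t$. The strongly stabilising condition is $W\equiv 0$, i.e.\ a finite system of polynomial equations in $a,b,c$ alone, which Maple solves, yielding exactly the three families. This single symbolic computation replaces the infinite family of checks your approach would require.
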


{
	\begin{proof}
		
		${\rm (i) \Leftrightarrow{}\rm(ii)}$.  Note that any triple of distinct points in $\bC P^1$ can be transformed into any other triple of distinct points in $\bC P^1$ by a M\"obius transformation. 
		Therefore, if a stabilising solution exists for some choice of three distinct roots, it exists for
		any other triple of distinct roots.
		
		${\rm (ii) \Leftrightarrow{}\rm(iii)}$. Consider the $S_3$-orbit of the function $f_1(t)=(t-x_1)^a(t-x_2)^b(t-x_3)^c$ containing 6 pairwise distinct functions
		$f_j$, $j=1,\dots,6$ in case $a<b<c$ and 3 such functions if any two of them coincide. Denote by $W$ the corresponding Wronskian of $f_1,\dots,f_6$ (divided by the trivial factor which is a polynomial of degree 6 in $t$).
		If $\mu=(a,b,c)$ has strongly stabilising secant degeneracy index, then $W\equiv 0$ for all
		$t,x_1,x_2,x_3$. In particular, all coefficients of $W(t)$ should vanish identically.  Using Maple we  able to find  all possible triples $(a,b,c)$ for which all coefficients of $W(t)$ are identically $0$. The solutions are presented
		in {\rm (iii)} above. 
		
	\end{proof}
	
}









\section{ Final remarks and problems} \label{finsec}

\noindent 
{\bf 1.}  Conjecture~\ref{conj:converse}  from  the introduction is equivalent  to the claim that for an appropriate choice of an $r$-tuple of distinct complex numbers, a certain matrix whose entries are polynomials in these numbers  has full rank. In other words, that some maximal minor of this matrix is a non-trivial polynomial  in the latter $r$-tuple which is highly plausible. Unfortunately, the structure of the matrix is rather involved and so far we are only able to handle a large number of  special cases. 

\smallskip\noindent
{\bf 2.} Conditions formulated in  Proposition~\ref{pr:general} are difficult to check which motivates the following questions.  

\medskip
\begin{problem} Give necessary and sufficient combinatorial conditions for a partition $\mu$ to have a growing/stabilizing secant degeneracy index. \end{problem}


\medskip
\begin{problem} For any partition $\mu$ with a growing secant degeneracy index, what is the leading term  of the  asymptotic of $\ell_{\mu^{\ldr}},$ when $t\to +\infty$?
Does it depend on  a particular choice of $\mu$? 
\end{problem}

\medskip
\noindent
{\bf 3.}
The next statement is obvious. 
\begin{proposition}\label{pr:mumubar}
The number $\ell_{\bar \mu}$ is monotone non-decreasing in the refinement partial order. In other words, if $\mu^\prime \succ \mu^{\prime\prime},$ then $\ell_{\bar \mu^\prime}\ge \ell_{\bar\mu^{\prime\prime}}$. \end{proposition}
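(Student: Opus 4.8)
The plan is to reduce the claim to the inclusion-monotonicity of the secant degeneracy index recorded earlier in the excerpt, namely that $V\subseteq W$ forces $\ell_V\ge\ell_W$. Thus it suffices to prove that the refinement relation $\mu'\succ\mu''$ reverses to an inclusion of closures $\bar S_{\mu'}\subseteq\bar S_{\mu''}$; the inequality $\ell_{\bar\mu'}\ge\ell_{\bar\mu''}$ then follows at once.

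To obtain this inclusion I would first show $S_{\mu'}\subseteq\bar S_{\mu''}$ by an explicit coalescence of roots. Since $\mu'$ is obtained from $\mu''$ by merging parts, every distinct root $\beta_j$ of a given form $f\in S_{\mu'}$, of multiplicity $\mu'_j$, corresponds to a block of parts of $\mu''$ whose sum is $\mu'_j$. I would then build a one-parameter family $f_\varepsilon$ by replacing each root $\beta_j$ with a cluster of pairwise distinct roots carrying exactly the parts of that block as their multiplicities, the cluster shrinking to $\beta_j$ as $\varepsilon\to0$. For small $\varepsilon\neq0$ the roots remain pairwise distinct and the multiplicity type is precisely $\mu''$, so $f_\varepsilon\in S_{\mu''}$, while $f_\varepsilon\to f$; hence $f\in\bar S_{\mu''}$. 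As $f$ was arbitrary, $S_{\mu'}\subseteq\bar S_{\mu''}$, and since $\bar S_{\mu''}$ is closed, passing to closures gives $\bar S_{\mu'}\subseteq\bar S_{\mu''}$. (The same construction shows more generally that $\bar S_\mu=\bigcup_{\nu\succeq\mu}S_\nu$, but only the single inclusion above is needed here.)

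Combining the two steps, inclusion-monotonicity applied to $\bar S_{\mu'}\subseteq\bar S_{\mu''}$ yields $\ell_{\bar\mu'}\ge\ell_{\bar\mu''}$, which is the assertion. The argument is formal once the closure inclusion is in hand, so there is no genuine difficulty; the only point requiring care is the routine verification that the degenerating family $f_\varepsilon$ really lies in $S_{\mu''}$ for all small nonzero $\varepsilon$, i.e. that the perturbed roots stay pairwise distinct and realize exactly the type $\mu''$. This bookkeeping is the reason the statement is merely \emph{obvious} rather than entirely trivial.
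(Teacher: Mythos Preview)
Your argument is correct and is precisely the approach the paper has in mind: the proposition is stated as ``obvious'' with no proof given, and the intended reasoning is exactly the combination of the inclusion $\bar S_{\mu'}\subseteq\bar S_{\mu''}$ with the general inclusion-monotonicity $V\subseteq W\Rightarrow\ell_V\ge\ell_W$ recorded earlier in the paper.
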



 Based on a substantial number of calculations, we conjecture the following. 

\begin{conjecture}\label{conj1}
For any partition $\mu\vdash d$, there exists  $\mu^\prime \succeq \mu$ such that $\ell_{\bar \mu}=\ell_{\mu^\prime}$.
\end{conjecture}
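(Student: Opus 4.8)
The plan is to prove the equivalent statement $\ell_{\bar\mu}=\min_{\mu'\succeq\mu}\ell_{\mu'}$. One inequality is free: since $S_{\mu'}\subseteq\bar S_\mu$ for every $\mu'\succeq\mu$, the monotonicity observation preceding Problem~\ref{prob:index} gives $\ell_{\mu'}\ge\ell_{\bar\mu}$, whence $\min_{\mu'\succeq\mu}\ell_{\mu'}\ge\ell_{\bar\mu}$. Thus the whole content lies in producing, for $\ell:=\ell_{\bar\mu}$, a single stratum $S_{\mu'}$ with $\mu'\succeq\mu$ that carries a minimal secant degeneracy relation of length $\ell$: any such relation gives $\ell_{\mu'}\le\ell=\ell_{\bar\mu}$, and combined with the above this forces $\ell_{\mu'}=\ell_{\bar\mu}$.

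First I would fix the combinatorial geometry: $\bar S_\mu=\bigsqcup_{\mu'\succeq\mu}S_{\mu'}$, each $S_{\mu'}$ is irreducible, and the closure order on strata coincides with the refinement order $\succeq$; hence $\bar S_\mu$ and $(\bar S_\mu)^\ell$ are irreducible. Next I would record the standard semicontinuity fact that, for $\ell$ ordered points, the rank of the associated $(d+1)\times\ell$ coordinate matrix is lower semicontinuous, so the dependent locus $D=\{(f_1,\dots,f_\ell)\in(\bar S_\mu)^\ell:\dim\langle f_1,\dots,f_\ell\rangle\le\ell-2\}$ is closed while the pairwise-distinctness locus $\Delta$ is open. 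By definition of $\ell_{\bar\mu}$ the intersection $D\cap\Delta$ is nonempty, and minimality forces every $\ell-1$ of the points to be independent (an $(\ell-1)$-point dependency among distinct forms would contradict minimality of $\ell$); consequently the relation $\sum c_if_i=0$ is unique up to scale with all $c_i\neq 0$.

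The heart of the argument is a degeneration step. I would choose an irreducible component $C$ of $D$ meeting $\Delta$; at its generic point the $\ell$ forms lie in a well-defined tuple of strata $(\nu_1,\dots,\nu_\ell)$ with each $\nu_i\succeq\mu$. If this tuple is constant, say all $\nu_i=\mu'$, we are done. Otherwise I would pass to the boundary $\bar C\setminus C$, where, $D$ being closed, the relation persists while each coordinate degenerates to a strictly coarser stratum. The goal is to locate a boundary point at which all $\ell$ coordinates have coalesced, via a matched collision pattern of their roots, into one common stratum $S_{\mu'}$ while remaining pairwise distinct forms. Concretely, I would try to realize such a specialization by a one-parameter family simultaneously merging a matched collection of roots in every $f_i$, chosen so that all resulting partitions equal a fixed coarsening $\mu'$ of the $\nu_i$. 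The motivating Example~\ref{ex}, where the minimal relation on $\bar S_{(2d+1,d,d,d,d)}$ is realized on the single coarser stratum $S_{(2d+1,2d,2d)}$, exhibits exactly this phenomenon and suggests that the correct $\mu'$ is obtained by merging parts so as to create the small-jump patterns shown to force low index in Section~\ref{sec:index}.

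The main obstacle is the compatibility of the two requirements in that degeneration: forcing a common target $\mu'$ is a coarsening condition imposed on every coordinate at once, whereas keeping the $\ell$ forms pairwise distinct is an openness condition that coarsening may violate by collapsing two forms into a single point of $\bar S_\mu$. I expect this to reduce, exactly as in the authors' discussion of Conjecture~\ref{conj:converse} in Section~\ref{finsec}, to showing that a suitable incidence variety of single-stratum relations has the expected dimension, equivalently that a polynomial matrix built from the coefficients $c_i$ and the coalescence data has full rank for a generic choice of merged roots. Establishing that the collision locus inside $\bar C$ has codimension at least one in every relevant boundary stratum, so that a generic common-stratum degeneration avoids collisions, is the crux; carrying this out uniformly in $\mu$, rather than case by case as is done here for two- and three-part partitions, is where I expect the real difficulty to lie.
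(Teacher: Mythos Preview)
The statement you address is presented in the paper as a \emph{conjecture}, not a theorem: the authors say it is ``based on a substantial number of calculations'' and give no proof. There is therefore no paper argument to compare against.

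Your outline correctly isolates the content. The inequality $\min_{\mu'\succeq\mu}\ell_{\mu'}\ge\ell_{\bar\mu}$ is indeed immediate from $S_{\mu'}\subseteq\bar S_\mu$, and your reformulation of the hard direction---produce a single coarsening $\mu'\succeq\mu$ whose stratum carries a length-$\ell_{\bar\mu}$ relation among pairwise distinct forms---is the right target. The semicontinuity and closedness remarks about the dependent locus $D$ are standard and fine. What is missing is exactly what you flag in your last paragraph: the degeneration step is a hope, not an argument. Two points deserve emphasis. First, an irreducible component $C$ of $D$ is already closed, so ``passing to $\bar C\setminus C$'' is not quite the right picture; you mean specializing inside $C$ away from its generic stratum-tuple, and nothing guarantees that $C$ contains \emph{any} point with all coordinates in a common stratum. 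Second, even granting such a point exists, the collision obstruction is genuine: degenerating too far (e.g.\ toward $(d)$) forces proportional forms because $\ell_{\bar\mu}<d+2$ in all nontrivial cases, so one must stop at a carefully chosen intermediate $\mu'$, and showing that a generic such stopping point avoids collisions is precisely the unproved full-rank condition the authors allude to in Section~\ref{finsec}. In short, you have rediscovered why the statement is a conjecture rather than a theorem; your proposal is a plausible strategy but not a proof.
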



\begin{thebibliography} {8}


\bibitem[Ar]{Ar} V.~Arnol'd, The cohomology ring of the colored braid group,  Collected Works Volume 2, 2014, pp 183--186.


\bibitem[BS]{BS} M.\,C.\,Beltrametti and A.\,J.\,Sommese. The adjunction theory of complex projective varieties, volume 16 of de Gruyter Expositions in Mathematics. Walter de Gruyter \& Co., Berlin, 1995.

\bibitem[CC]{CC} L.~Chiantini, C.~Ciliberto, On the dimension of secant varieties. J. Eur. Math. Soc. (JEMS) 12 (2010), no. 5, 1267--1291.

\bibitem[COV] {COV} L.~Chiantini, G.~Ottaviani, and N.~Vannieuwenhoven, On generic identifiability of symmetric tensors of subgeneric rank, arXiv:1504.00547v2, April 2015.  



\bibitem[EiHa] {EiHa}D.~Eisenbud, J.~Harris, On varieties of minimal degree (a centennial account). Algebraic geometry, Bowdoin, 1985 (Brunswick, Maine, 1985), 3 -- 13, 
Proc. Sympos. Pure Math., 46, Part 1, Amer. Math. Soc., Providence, RI, 1987.   












\bibitem[KhSh]{KhSh} B.~Khesin, B.~Shapiro, Swallowtails and Whitney umbrellas are homeomorphic, J. of Alg. Geom., vol. 1 (1992) 549--560.






\bibitem[NeSl]{NeSl} D.~J.~Newman, M.~Slater, Waring's problem for the ring of polynomials. J. Number Theory 11 (1979), no. 4, 477--487.  

\bibitem[Re]{Re}   B.~Reznick, Linearly dependent powers of quadratic forms, Preliminary report: 1999-2011, slides.  \url{http://www.math.uiuc.edu/%BBreznick/11111.pdf}



%

\bibitem[Va] {Va} V.~Vassiliev,  Complements of discriminants of smooth maps: topology and applications. Translated from the Russian by B. Goldfarb. Translations of Mathematical Monographs, 98. American Mathematical Society, Providence, RI, 1992. vi+208 pp. 


\end{thebibliography}
\end{document}